\newtheorem{thm}{Theorem}[section]
\newtheorem{cor}[thm]{Corollary}
\newtheorem{lem}[thm]{Lemma}
\newtheorem{prop}[thm]{Proposition}
\newtheorem{dfn}[thm]{Definition}
\newtheorem{rem}[thm]{Remark}
\def\spmapright#1{\smash{%
 \mathop{\hbox to 1.3cm{\rightarrowfill}}
  \limits^{#1}}}
\begin{document}
\begin{frontmatter}
\title{\bf Jumping flatness and  Aluthge transform of recursive weighted shifts}
\tnotetext[t1]{The  authors are supported by the CeReMaR and the  Hassan II Academy of sciences. The last author is supported by African university of Sciences and technology-Abuja. Nigeria.}
\author[fsr]{Hamza El Azhar}
\ead{elazharhamza@gmail.com}
%-------------------------
\author[fsr]{Kaissar Idrissi}
\ead{kaissar.idrissi@gmail.com} 
%----------Author 1
\author[fsr]{El Hassan Zerouali\corref{cor1}}
\ead{elhassan.zerouali@um5.ac.ma}
\cortext[cor1]{Corresponding author}
\address[fsr]{Mohammed V University in Rabat, Rabat, Morocco.}

%{\footnote AMS subject class[2020]{Primary Primary 11B99; 44A60; 47B37 Secondary  30C15, 40A99}
\begin{keyword} Jumping flatness, Aluthge transforms,  Recursive sequences , Hambourger weighted type sequences, subnormal weighted shift.\\
\MSC[2020] {Primary 11B99; 44A60; 47B37 Secondary  30C15, 40A99.}
\end{keyword}
%\footnote{}
\begin{abstract}

 We devote this paper  to Hamburger type weighted shifts.  We  give  in particular  an affirmative  answer to a problem concerning  subnormality of the Aluthge transform of Hamburger moment measures with finite support. we also extend the notion flatness,  “jumping flatness property”  introduced recently by Exner et all  for 
Hamburger-type weighted shift and provide  obtain several results related to the representing measure of such weighted shifts. 

\end{abstract}
\end{frontmatter}

%\maketitle

\section{Introduction}
Let us denote  $\mathcal{H}$ an infinite dimensional  Hilbert space and   let $\mathcal{L(H)}$ be  the space of all bounded linear operators on $\mathcal{H}$. An operator  $T \in \mathcal{L(H)}$ is normal if $TT^*=T^*T$,  is subnormal if it is the restriction of some normal operators and is hyponormal if $T^*T-TT^*\ge 0$. Here $T^*$ stands for the usual adjoint operator of $T$. The polar decomposition of an operator is given by the unique representation $T= U|T|$, where $|T| = (T^*T)^{\frac{1}{2}}$ and $U$ is a partial isometry satisfying $ker U = ker T$ and $ker U^*=ker T^*$. The Aluthge transform is then given by the expression 
$$\tilde T= |T|^{\frac{1}{2}}U|T |^{\frac{1}{2}}.$$
  The Aluthge  transform was introduced in  \cite{al} by Aluthge, in
 order to extend several  inequalities valid for hyponormal operators, and has received deep attention in the recent years.

 We consider below $\mathcal{H}=l^2(\mathbb{Z}_+)$ endowed by some  orthonormal basis $\{e_n\}_{n\in\mathbb{Z}_+}$. The forward  shift operator  $W_{\alpha}$  is defined on the basis  by $W_\alpha e_n=\alpha_ne_{n+1}$, where   $\alpha=\{\alpha_n\}_{n\geq 0}$ is a given  a  sequence of positive real numbers (called {\it weights}). We associate with $W_\alpha$ the moments sequence  obtained  by $$\gamma_0=1  \mbox{  and } \gamma_k\equiv\gamma_k(\alpha):=\alpha_0^2\alpha_1^2\cdots\alpha_{k-1}^2 \mbox{ for } k\geq 1.$$ 
 We will say that a sequence $\gamma$ admits a representing signed measure (called also a charge) supported in $K \subset {\mathbb R}$,  if 
 \begin{equation}\label{cmp}
\gamma_n = \int_Kt^nd\mu(t)\: \: \mbox{ for every } \: n\ge 0 \:\: \: \mbox{and} \: \: supp(\mu) \subset K.
\end{equation}
It is known since  1938 that every sequence is a  charge moment sequence supported in the real field.  \cite{polya1938indetermination}.

The weighted shift operator is bounded if and only if  $\|W_{\alpha}\|=sup_{n\geq0}\alpha_n<+\infty$.  It is clear  that $W_{\alpha}$ is never  normal  and that  $W_\alpha$ is hyponormal precisely when $\alpha_n$ is non decreasing. On the other hand,   Berger's Theorem says, $W_{\alpha}$  is a subnormal operator, if and only if, there exists a nonnegative Borelean measure $\mu$ (called {\it Berger measure}), which is a representing measure of $\{\gamma_n\}_{n\geq0}$ and such that $supp(\mu)\subset[0,\|W_{\alpha}\|^2]$. The latter is equivalent to the positivity of the   two Hankel matrices $(\gamma_{i+j})_{i,j \ge 0}$ and  $(\gamma_{i+j+1})_{i,j \ge 0}$. It is also known that  $\{\gamma_n\}_{n\geq0}$ admits a nonnegative  representing measure supported in ${\mathbb R}$ if and only if $(\gamma_{i+j})_{i,j \ge 0}$. Such a seqeuence willl said to be Hamburger sequence.

 We accord to the sequence $\gamma \equiv \{ \gamma_n \}_{n \in \mathbb{Z}_+}$ the following matrices
 \begin{equation}\label{test,6}
 M_{n}(k) = \left( \begin{matrix}
 \gamma_k            & \gamma_{k +1}            & \ldots & \gamma_{k +n}\\
 \gamma_{k +1}       & \gamma_{k +2}            & \ldots & \gamma_{k +n +1}\\
 \vdots              & \vdots                   & \ddots & \vdots\\
 \gamma_{k +n}       & \gamma_{k +n +1}         & \ldots & \gamma_{k +2n}
 \end{matrix} \right), \text{ for } n, k \in \mathbb{Z}_+.
 \end{equation}
 
  Recall the following definitions from \cite{Ex3}, 
\begin{dfn}
 A weighted shift $W_\alpha $ has property $H(n)$ [resp., property $\tilde H(n)]$ if $ M_{n}(k) \ge 0$ for all $k =
0, 2, 4, ... [resp., \tilde  M_n(k) \ge  0 $ for all $k = 1, 3, 5, ...]. $ And $W_\alpha $ has property $H(\infty) $ [resp., property $\tilde H(\infty)] $ if
it has property $H(n) $ [resp., property $\tilde H(n)$] for all $n \in {\mathbb Z}_+$. In particular, $W_\alpha $ is a Hamburger-type weighted
shift if $W_\alpha $ has property $H(\infty)$ and is subnormal if $W_\alpha $  has both $H(\infty)$ and $\tilde H(\infty)$.
\end{dfn}
  For a large familly of  shifts, a flatness phenomena occurs, when two successive weight are equal. More precisely 
  \begin{prop} Let $W_\alpha $ be a  weighted shift such that $\alpha_{n_0}=\alpha_{n_0+1}$ for some $n_0\ge 1$. We have 
  
  $(i)$ \:  (\cite[ Theorem\, 6]{stam} ). If $W_\alpha $ subnormal, then $\alpha$ is flat, i.e,   $\alpha_1=\cdots =\alpha_n =\cdots$

$(ii)$ \: (\cite[Corollary\, 6]{curt}). If $W_\alpha $ has property $H(2)$ and  $\tilde H(2)$  then $\alpha$ is flat.

$(iii)$ \: (\cite[Theorem\, 4.2]{Ex1}). If $W_\alpha $ has property $H(3)$   then $\alpha$ is flat.
\end{prop}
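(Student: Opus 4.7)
My plan is to prove the three parts in turn, using progressively weaker hypotheses and correspondingly more subtle arguments.

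For (i), I would invoke Berger's theorem to obtain a representing probability measure $\mu$ on $[0,\|W_\alpha\|^2]$ with $\gamma_k=\int t^k\,d\mu(t)$. The hypothesis $\alpha_{n_0}=\alpha_{n_0+1}$ is equivalent to $\gamma_{n_0+1}^2=\gamma_{n_0}\gamma_{n_0+2}$, which is exactly the equality case of the Cauchy--Schwarz inequality applied to the positive measure $t^{n_0}\,d\mu(t)$ (positive because $n_0\geq 1$ and $\mathrm{supp}(\mu)\subset[0,\infty)$) against the two functions $1$ and $t$. This forces $1$ and $t$ to be proportional in $L^2(t^{n_0}d\mu)$, so $t^{n_0}\,d\mu$ is concentrated at a single point $s>0$, whence $\mu=a\delta_0+b\delta_s$ with $b>0$. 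A short computation gives $\gamma_k=bs^k$ for $k\geq 1$, so $\alpha_n^2=s$ for all $n\geq 1$.

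For (ii), I would first observe that $H(2)\wedge\tilde H(2)$ gives $M_2(k)\geq 0$ for every $k\geq 0$, and then propagate the equality $\alpha_{n_0}=\alpha_{n_0+1}$ in both directions from $n_0$ via the following standard fact: if $M$ is a $3\times 3$ positive semidefinite matrix with a singular $2\times 2$ principal submatrix $A$, then any $v\in\ker A$, padded with a zero in the remaining coordinate, lies in $\ker M$. For forward propagation I would apply this to the top-left block of $M_2(n_0)$, singular by hypothesis; the third coordinate of the resulting kernel identity produces the relation $\gamma_{n_0}\gamma_{n_0+3}=\gamma_{n_0+1}\gamma_{n_0+2}$, which combined with the hypothesis yields $\alpha_{n_0+1}=\alpha_{n_0+2}$. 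Iterating on $M_2(n_0+1),M_2(n_0+2),\ldots$ then gives flatness for all $n\geq n_0$. For backward propagation (when $n_0\geq 2$), the symmetric application of the lemma to the bottom-right block of $M_2(n_0-2)$ yields $\gamma_{n_0-1}\gamma_{n_0+1}=\gamma_{n_0}^2$, i.e.\ $\alpha_{n_0-1}=\alpha_{n_0}$; iterating down to $M_2(0)$ gives $\alpha_1=\alpha_{n_0}$, completing the proof.

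For (iii) only $H(3)$ is available, so $M_3(2m)\geq 0$ for every $m\geq 0$ but the odd-shifted $M_2(2m+1)$ are no longer a priori positive, and the symmetric backward step of (ii) fails. The plan is again to propagate via PSD kernel arguments, but now inside the $4\times 4$ matrices $M_3(2m)$, whose richer structure must compensate for the missing odd-shifted positivity: one selects a principal sub-block of $M_3(2m)$ made singular by the hypothesis, pads the corresponding kernel vector, and reads off moment relations that straddle both parities. The main obstacle, which I expect to be the crux of the proof, will be to arrange an iteration on the even-shifted $M_3$-matrices that lowers the index of the equality $\alpha_k=\alpha_{k+1}$ by two at each step while re-establishing the parity-crossing relation, so that the induction terminates at $\alpha_1=\alpha_2$ without any appeal to odd-shifted Hankel matrices.
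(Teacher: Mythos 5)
First, note that the paper does not prove this Proposition at all: it is a catalogue of three known results quoted with explicit citations to \cite{stam}, \cite{curt} and \cite{Ex1}, so there is no in-paper argument to compare against. Judged on their own terms, your proofs of (i) and (ii) are correct and are essentially the standard arguments. For (i), the equality case of Cauchy--Schwarz in $L^2(t^{n_0}d\mu)$ does force $\mu=a\delta_0+b\delta_s$ (and $b,s>0$ since $\gamma_1>0$), giving $\alpha_n^2=s$ for $n\ge1$. For (ii), the combination of $H(2)$ and $\tilde H(2)$ does give $M_2(k)\ge0$ for every $k$, and the kernel-padding fact for positive semidefinite matrices correctly propagates the degeneracy one step forward and one step backward at a time, down to $\alpha_1=\alpha_2$.

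Part (iii), however, is a genuine gap: what you have written is a plan, not a proof, and the specific device you propose does not survive contact with the hypotheses. The singular $2\times2$ Hankel block $\left(\begin{smallmatrix}\gamma_{n_0}&\gamma_{n_0+1}\\ \gamma_{n_0+1}&\gamma_{n_0+2}\end{smallmatrix}\right)$ occurs as a \emph{principal} submatrix of $M_3(2m)$ only when $n_0=2m+2i$ for some $0\le i\le 2$, i.e.\ only when $n_0$ is even; for odd $n_0$ it appears only as a non-principal (off-diagonal) $2\times2$ block of the even-shifted matrices, and the vanishing of a non-principal minor of a PSD matrix does not produce a kernel vector to pad. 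So the ``select a singular principal sub-block and pad'' step, which is the engine of your argument in (ii), is simply unavailable in half the cases, and you have not said what replaces it. This is exactly where the content of \cite[Theorem 4.2]{Ex1} lies: converting a degeneracy at an odd position into constraints on the even-shifted $4\times4$ Hankel matrices requires determinant identities (of Cauchy--Schwarz or Desnanot--Jacobi type, in the spirit of the propagation arguments elsewhere in this paper) rather than a direct kernel argument, and the backward propagation must also be rebuilt without any odd-shifted positivity. You correctly identify this as the crux, but identifying the crux is not the same as resolving it; as it stands, (iii) is unproved.
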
 

Jumping flatness of weighted shifts was considered in some recent paper by Exner et all. It is defined as follows,
\begin{dfn}  A weighted shift  $W_\alpha $ with weight sequence $\alpha = (\alpha_n)_{n\ge 0}$ is said to have  
the jumping flatness property if $\alpha_n= \alpha_{n+2}$ for every $n\ge 1$.  In addition, $W_\alpha $  
has jumping flatness property  of type I, if $\alpha_0<\alpha_2$, and   has jumping
flatness property of type II, if $\alpha_0=\alpha_2$.
\end{dfn}
We outline the definition above to give the next extension of jumping flatness,

 \begin{dfn}
 A weighted shift $W_\alpha$, with weight sequence $(\alpha_n)_{n\ge 0}$, has the $k$-jumping flatness property if $\alpha_n = \alpha_{n +k}$ for every integer  $n \ge 1$.
 \end{dfn}

Clearly, flatness is $1$-jumping flatness property and the Jumping flatness property, introduced in \cite{Ex3}, coincides  with $2$-jumping flatness property. Moreover, subnormal weighted shifts with $k$-jumping property are flat, since their weights are non-decreasing. Hence, $k$-jumping flatness property is consistent only for non-subnormal weighted shifts.

The Aluthge transform $\tilde W_\alpha$ of a weighted shift  $W_\alpha$  is also a weighted shift, denoted below   $W_{\tilde \alpha}$ . Indeed, it is easy  to check that  $|W_\alpha| e_n=\alpha_ne_n$ and that $Ue_n=e_{n+1}$. It   follows then that
$$\tilde W_\alpha e_n= \sqrt{\alpha_n\alpha_{n+1}}e_{n+1}= {\tilde \alpha_n}e_{n+1}= W_{\tilde \alpha}e_n.$$
Notice also that ${\tilde \gamma}^2_n= \frac{1}{\alpha_0}\gamma_n \gamma_{n+1}$. In several recent  papers, the problem of subnormatiy of the aluthge transform of  weighted shifts  was considered. See \cite{Ex3}, for example.
 In \cite{Ex3}, the next question sas considered
 
 {\bf (HP) } Under what conditions is the Hamburger-type property of a weighted shift preserved under the Aluthge
transform?

A   special attention was devoted to weighted shifts with $2$-jumping flatness property. Since in this case , as it will be shown below, the representing measure has only two or 3 atoms, the next general problem is stated.

{\bf Problem 5.8.} Let  $W_\alpha$  be a weighted shift with the associated Hamburger moment measure $\mu  := \phi \delta_p + \psi\delta_r +\rho\delta_q $
for some $p > 0$ and $ -p <r<q$. Is it true that  ${\tilde W}_\alpha$  is subnormal if and only if
$r = 0$ and $ p = q?$\\
 
This paper is organised as follows, We solve first problem 5.8  by giving an affirmative answer in the next section. We also give several results concerning {\bf (HP)}.  Section 3 is devoted to $k-$ jumping flatness property for weighted shifts satisfying $H(n)$ for some adequate $n$. It is shown that a propagation phenomena occurs in this case. 
\section{ Weighted shifts with subnormal Aluthge transform}
Let  $W_\alpha$ be a weighted shift with  associated representing moment measure 
$$
\mu:=a\delta_{-p}+b\delta_{r}+c\delta_{q}
$$
for some $p >0$ and $-p <r<q$. The problem is to find conditions on the parameters $p, q$ and $r$ equivalent to ${\tilde W}_\alpha$ is  subnormal. We start with the next affirmative answer to Problem 5.8.
\begin{thm}
Let $W_\alpha$ be a weighted shift with the associated representing moment measure 
$$
\mu:=a\delta_{-p}+b\delta_{r}+c\delta_{q}
$$
for some $p >0$ and $-p <r<q$.
Then ${\tilde W}_\alpha$ is subnormal if and only if $r=0$ and $p=q$.
\end{thm}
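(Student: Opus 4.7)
The plan is to prove the two implications separately. For sufficiency, assume $r=0$ and $p=q$; the moments reduce to $\gamma_n=\bigl(a(-1)^n+c\bigr)p^n$ for $n\ge 1$, whence $\gamma_n\gamma_{n+1}=(c^2-a^2)p^{2n+1}$ for $n\ge 1$. Using $\tilde\gamma_n^2=\gamma_n\gamma_{n+1}/\alpha_0^2$ together with $\alpha_0^2=\gamma_1=(c-a)p$, one obtains $\tilde\gamma_n=\sqrt{a+c}\,p^n$ for $n\ge 1$ and $\tilde\gamma_0=1$. The nonnegative measure
\[
\tilde\mu=(1-\sqrt{a+c})\delta_0+\sqrt{a+c}\,\delta_p
\]
(nonnegative because $a+c\le 1$ follows from $a+b+c=1$) reproduces these moments on $[0,\infty)$ and is therefore a Berger measure for $\tilde W_\alpha$, establishing subnormality.

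For necessity, the strategy is to produce two representations of $\tilde\gamma_n^2$ and match them. Expanding the product,
\[
\gamma_n\gamma_{n+1}=\sum_i a_i^2 x_i(x_i^2)^n+\sum_{i<j}a_ia_j(x_i+x_j)(x_ix_j)^n,
\]
with $(x_1,x_2,x_3;a_1,a_2,a_3)=(-p,r,q;a,b,c)$, displays $\gamma_n\gamma_{n+1}$ as the $n$-th moment of the signed measure
\[
\tau=-a^2p\,\delta_{p^2}+b^2r\,\delta_{r^2}+c^2q\,\delta_{q^2}+ab(r-p)\delta_{-pr}+ac(q-p)\delta_{-pq}+bc(r+q)\delta_{rq}.
\]
On the other hand, if $\tilde W_\alpha$ is subnormal with Berger measure $\tilde\mu$ on $[0,\infty)$, then
\[
\tilde\gamma_n^2=\iint (st)^n\,d\tilde\mu(s)\,d\tilde\mu(t)
\]
is the $n$-th moment of the pushforward $\sigma$ of $\tilde\mu\otimes\tilde\mu$ under the multiplication map, and $\sigma$ is a positive measure on $[0,\infty)$. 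Since both $\sigma$ and $\tau/\gamma_1$ are compactly supported and share every moment, they must coincide; in particular, the mass of $\tau$ at any strictly negative point must vanish.

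The atom at $-pq<0$ is isolated from the other five bases (the coincidences $-pq=-pr$ and $-pq=rq$ would force $r=q$ and $r=-p$, both excluded by $-p<r<q$), so its coefficient $ac(q-p)$ must equal zero; with $a,c>0$ this gives $q=p$. After imposing $q=p$, the surviving bases $\{p^2,r^2,-pr,-p^2,rp\}$ remain pairwise distinct whenever $r\ne 0$, since the remaining possible collisions require $r\in\{-p,0,p\}$. If $r>0$ then the negative base $-pr$ carries the nonzero coefficient $ab(r-p)$ (nonzero since $r<p$), contradicting the positivity of $\sigma$; if $r<0$ then the negative base $rp$ carries the nonzero coefficient $bc(r+p)$ (nonzero since $r>-p$), again a contradiction. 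Hence $r=0$. The main technical obstacle I foresee is precisely this bookkeeping of coincidences among the six bases $\{p^2,r^2,q^2,-pr,-pq,rq\}$, which the strict inequalities $-p<r<q$ ultimately resolve cleanly.
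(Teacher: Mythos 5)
Your proof is correct, and in the necessity direction it is essentially the paper's own argument: your signed measure $\tau$ is exactly the multiplicative convolution $\mu*t\mu$ that the paper computes, and both proofs conclude by forcing the coefficient of the negatively supported atom $\delta_{-pq}$ to vanish (giving $p=q$) and then the coefficients of $\delta_{-pr}$ and $\delta_{rp}$ (giving $r=0$). You are in fact more careful on two points worth keeping: you check explicitly that $-pq$ (and, after $q=p$, the bases $\pm pr$) cannot collide with any of the other bases, which is what legitimates reading off a single coefficient, and you split the case $r\neq0$ into $r>0$ (negative base $-pr$) and $r<0$ (negative base $rp$), whereas the paper's one-line ``because of the term $ab(r-p)\delta_{-pr}$'' only covers $r>0$ as written. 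You also justify identifying $\sigma$ with $\tau/\gamma_1$ by moment determinacy of compactly supported measures, which is precisely the content of the paper's Proposition \ref{lemmeconvolution}. Where you genuinely diverge is sufficiency: the paper delegates it to \cite[Theorem 5.7]{Ex3}, while you compute $\tilde\gamma_n=\sqrt{a+c}\,p^n$ and exhibit the Berger measure $(1-\sqrt{a+c})\delta_0+\sqrt{a+c}\,\delta_p$ directly; this is self-contained and uses only Berger's theorem, at the cost of the easily verified normalizations $a+b+c=1$ and $c>a$ (the latter from $\gamma_1>0$) that make $\sqrt{a+c}\le1$ and the division by $\gamma_1$ legitimate. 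One cosmetic note: your normalization $\tilde\gamma_n^2=\gamma_n\gamma_{n+1}/\alpha_0^2$ is the correct one, whereas the paper writes $\tilde\gamma_n^2=\frac{1}{\alpha_0}\gamma_n\gamma_{n+1}$.
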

\begin{proof}
   The following proposition that extends the subnormal case treated in \cite[Proposition 1.2]{Curto2019},  is the key point  of our proof:
    \begin{prop}\label{lemmeconvolution}
       Let $W_\alpha$ be a Hamburger weighted shift with  associated representing moment measure  $\mu$. Then  ${\tilde W}_\alpha$  admits a representing moment measure  if and only if  there exists a probability measure $\nu$ such that $\nu*\nu=\mu*t\mu$. Furthermore, ${\tilde W}_\alpha$ is subnormal if and only if $\nu$ is $\mathbb{R}^+$-supported.

Where $*$ denotes the multiplicative convolution 
$$
[\nu*\mu](A)=\int_{\mathbb{R}} \chi_A(xy) d\nu(x)d\mu(y)
$$
    \end{prop}
\begin{proof}[Proof of Proposition \ref{lemmeconvolution}]

        For the direct implication,  suppose  there exists a measure $\nu$ such that $\nu*\nu=\mu*t\mu$.  Then 
            \begin{align*}
                \alpha_0\tilde\gamma_n^2 &=\gamma_n \gamma_{n+1} =\left(\int_{\mathbb{R}}t^n d\mu(t)\right)\left(\int_{\mathbb{R}}t^{n+1} d\mu(t)\right)\\&=\left(\int_{\mathbb{R}}t^n d\mu(t)\right)\left(\int_{\mathbb{R}}s^{n} sd\mu(s)\right)=\int_{\mathbb{R}}\int_{\mathbb{R}} (st)^n d\mu(t)sd\mu(s)\\&=\int_{\mathbb{R}} u^n d(\mu*t\mu)(u)=\int_{\mathbb{R}} u^n d(\nu*\nu)(u)=\left(\int_{\mathbb{R}} u^n d\nu(u)\right)^2,
            \end{align*}
        and hence $\nu$ is   representing for  ${\tilde W_\alpha}$.\\
    Conversely, assume that  ${\tilde W_\alpha}$ admits a representing measure. Since $W_\alpha$ admits a representing measure,  then $W_{\tilde \alpha^2}=W_{\alpha S(\alpha)} $ has as representing measure $\mu*t\mu$. Where $S(\alpha)= (\alpha_{n+1})_n$.  Thus we have
$$
\int_{\mathbb{R}} u^n d(\nu*\nu)(u)=\left(\int_{\mathbb{R}} u^n d\nu(u)\right)^2=\int_{\mathbb{R}} u^n d(\mu*t\mu)(u)
$$
Since the weighted shift $W_\alpha$ is bounded,  we get the the support of $\mu$ and $\nu$ are compact \cite{Ex1}, so by Riesz representation theorem, we have $\nu*\nu=\mu*t\mu$.
   
        For the last point, since the measure $\nu$ is  Hamburger determinate we get that the Aluthge transform is subnormal if and only if $\nu$ is supported on $\mathbb{R}^+$.
    Sufficiency is obtained  by \cite[Theorem 5.7]{Ex3}.
        \end{proof}

    Assume now that $\tilde W_\alpha$ is subnormal. There exists a measure $\nu$ supported on $\mathbb{R}^+$, such that $\mu*t\mu=\nu*\nu$, in particular the support of $\mu*t\mu$ is in $\mathbb{R}^+$. Moreover,
\begin{equation}\label{conv2}
        \mu*t\mu=-pa^2\delta_{p^2}+rb^2\delta_{r^2}+qc^2 \delta_{q^2}+ab(r-p) \delta_{-pr}+ac(q-p)\delta_{-pq}+bc(q+r)\delta_{rq}
\end{equation}
    Since $q> 0$ (because weights are positive), $-pq< 0$, so $ac(q-p)=0$ this is equivalent to say that $p=q$. Hence \eqref{conv2} transforms to
\begin{equation}\label{conv2'}
        \mu*t\mu=p(c^2-a^2)\delta_{p^2}+rb^2\delta_{r^2}+ab(r-p) \delta_{-pr}+bc(p+r)\delta_{rp}.
\end{equation}
    If we assume that $r\neq 0$ then because of the term $ab(r-p) \delta_{-pr}$, we get $r=p=q$ which is impossible ($r<q$). Finally $p=q$ and $r=0$.
   \end{proof}
   We deduce the next corollary,
 \begin{cor}\label{cor3.2}
 Let $W_\alpha$ be a Hamburger recursively generated weighted shift and let $\mu $ be its  associated 
representing moment measure.  If ${\tilde W}_\alpha$ is of Hamburger type with associated measure $\nu$, then $\nu$ is discrete, Furthermore
$$
|supp(\nu)|=|supp(\mu)|.
$$
Where 
$$
|supp(\mu)|=\{ |\lambda| : \lambda \in supp(\mu)\}.
$$
 \end{cor}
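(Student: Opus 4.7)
The plan is to invoke Proposition 2.2, which under the hypothesis that $\tilde W_\alpha$ is Hamburger type gives the key identity
\[
\nu * \nu \;=\; \mu * t\mu
\]
as measures on $\mathbf{R}$.

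For discreteness, $\mu$ is finite atomic (as $W_\alpha$ is recursively generated), so $\mu * t\mu$ is finite atomic. Since $\nu \ge 0$, any continuous component of $\nu$ would persist in $\nu * \nu$, and an infinite atomic support would yield infinitely many distinct products $z_k z_l$ in $\nu * \nu$ (no cancellation being possible for a positive measure); either outcome would contradict $\nu*\nu = \mu*t\mu$ being finite atomic.

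For the support equality, I would push both sides under $\phi\colon \mathbf{R} \to [0,\infty)$, $\phi(t) = |t|$. Since $\phi$ is a multiplicative homomorphism, $\phi_*$ intertwines multiplicative convolution, and setting $\tau := \phi_*\nu$, $\rho := \phi_*\mu$, and $\sigma := \phi_*(\mu|_{(0,\infty)}) - \phi_*(\mu|_{(-\infty,0)})$, I obtain the positive-measure identity on $(0,\infty)$:
\[
\tau * \tau \;=\; \rho * t\sigma,
\]
with $\mathrm{supp}(\tau) = |\mathrm{supp}(\nu)|$ and $\mathrm{supp}(\rho) = |\mathrm{supp}(\mu)|$. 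Positive compactly supported measures on $(0,\infty)$ are uniquely determined by their self-convolution (Mellin/Fourier uniqueness after $\log$-pushforward), so $\tau$ is the unique positive measure satisfying this equation.

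Writing $|\mathrm{supp}(\mu)| = \{c_1,\ldots,c_m\}$, $\alpha_i := \rho(\{c_i\})$, $\beta_i := \sigma(\{c_i\})$, I would exhibit the candidate $\tau_0 := \sum_{i=1}^m \sqrt{c_i\alpha_i\beta_i}\,\delta_{c_i}$ supported on $|\mathrm{supp}(\mu)|$ and verify $\tau_0 * \tau_0 = \rho * t\sigma$. Matching diagonal atoms at $c_i^2$ gives $t_i^2 = c_i\alpha_i\beta_i$, with $\beta_i > 0$ extracted from the Hamburger positivity (starting at the extremal elements of $|\mathrm{supp}(\mu)|$, whose squares factor uniquely in $|\mathrm{supp}(\mu)|^2$). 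Off-diagonal atoms at $c_ic_j$ reduce, by AM-GM, to the identity $c_j\alpha_i\beta_j = c_i\alpha_j\beta_i$, forced again by the positivity of $\rho * t\sigma$. Uniqueness of $\tau$ then yields $\tau = \tau_0$, hence $|\mathrm{supp}(\nu)| = \mathrm{supp}(\tau) = |\mathrm{supp}(\mu)|$. The main obstacle is the multiplicative-collision case in which two distinct pairs give the same product $c_ic_j = c_kc_l$: several terms then superpose at a single atom of $\rho * t\sigma$ and the off-diagonal matching is no longer a simple pairwise identity, requiring an inductive propagation of Hamburger positivity through the multiplicative order of $|\mathrm{supp}(\mu)|$ rather than a term-by-term comparison.
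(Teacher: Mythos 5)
Your discreteness argument is sound and is essentially the only one available: since $W_\alpha$ is recursively generated, $\mu$ is finitely atomic, hence so is $\mu*t\mu$, and because $\nu\ge 0$ no cancellation can occur in $\nu*\nu$, so a continuous component of $\nu$, or infinitely many atoms, would survive in $\nu*\nu=\mu*t\mu$. Note that the paper gives no written proof of this corollary at all — it is presented as an immediate deduction from Proposition \ref{lemmeconvolution} — and the way it is later used (in the four-atom theorem) shows the intended argument works directly with the measure $\nu$ that the hypothesis hands you, by comparing $supp(\nu)\cdot supp(\nu)=supp(\nu*\nu)$ with $supp(\mu*t\mu)\subset supp(\mu)\cdot supp(\mu)$.

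Your second half has a genuine gap precisely because you abandon that given $\nu$. You already possess a positive square root, namely $\tau=\phi_*\nu$; the only task is to identify its support. Instead you construct a second candidate $\tau_0=\sum_i\sqrt{c_i\alpha_i\beta_i}\,\delta_{c_i}$ and try to verify $\tau_0*\tau_0=\rho*t\sigma$, which is strictly harder than the original problem and, as structured, circular: the verification needs $\beta_i>0$ for every $i$ and the off-diagonal identities $c_j\alpha_i\beta_j=c_i\alpha_j\beta_i$, and neither is ``forced by the positivity of $\rho*t\sigma$'' — positivity of the atom at $c_ic_j$ only gives $\alpha_ic_j\beta_j+\alpha_jc_i\beta_i\ge 0$, not the AM--GM equality case. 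Those identities are exactly what one would \emph{extract} from already knowing $supp(\tau)=\{c_1,\dots,c_m\}$, which is the conclusion. You also concede that the argument collapses when $c_ic_j=c_kc_l$ for distinct pairs and you do not supply the ``inductive propagation'' you invoke there. (Your uniqueness step for positive compactly supported square roots is correct — an entire-function/integral-domain argument after the logarithmic change of variable — but it does you no good until $\tau_0*\tau_0=\rho*t\sigma$ is actually established.) The direct route avoids all of this: since $\nu\ge 0$ is finitely atomic, the diagonal atoms $\nu(\{d\})^2\delta_{d^2}$ of $\nu*\nu$ cannot cancel, so every $d\in supp(\nu)$ satisfies $d^2\in supp(\mu*t\mu)\subset supp(\mu)\cdot supp(\mu)$, and one then pins down $|supp(\nu)|$ against $|supp(\mu)|$ by comparing extremal elements of the two product sets. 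As written, your proof of the support equality is incomplete.
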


    The previous proposition yields  a more general result of independent interest.
    
\begin{thm}\label{fib}
Let $W_\alpha$ be a non subnormal Hamburger recursively generated weighted shift and let $\mu=\sum\limits_0^ka_i\delta_{\lambda_i}$ be its  associated 
representing moment measure.  If for every $i\ne j, k$ satisfying $\lambda_i\le 0$, we have $\lambda_i\ne \lambda_j\lambda_k$, then ${\tilde W}_\alpha$  is not a Hamburger type shift.

\end{thm}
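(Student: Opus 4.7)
The proof proceeds by contradiction, with Proposition~\ref{lemmeconvolution} as the pivot. If $\tilde W_\alpha$ were of Hamburger type, then by the proposition there would exist a non-negative measure $\nu$ with
\[
\nu*\nu \;=\; \mu*t\mu.
\]
Because $\nu\ge 0$, the multiplicative convolution $\nu*\nu$ is itself a non-negative (discrete) measure; hence every atomic coefficient of $\mu*t\mu$ would have to be non-negative. The whole strategy is therefore to exhibit a single location at which $\mu*t\mu$ carries a strictly negative mass.

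The first concrete step is to compute $\mu*t\mu$ explicitly. Writing $\mu=\sum_{i=0}^{k}a_i\delta_{\lambda_i}$ and grouping the terms $(i,j)$ with $(j,i)$, exactly as in the calculation preceding Theorem~2.1, yields
\[
\mu*t\mu \;=\; \sum_{i} a_i^{2}\lambda_i\,\delta_{\lambda_i^{2}} \;+\; \sum_{i<j}a_i a_j(\lambda_i+\lambda_j)\,\delta_{\lambda_i\lambda_j}.
\]
Since $W_\alpha$ is non-subnormal and of Hamburger type, $supp(\mu)$ is not contained in $[0,\infty)$, so we may fix an index $i_0$ with $\lambda_{i_0}<0$. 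The diagonal pair $(i_0,i_0)$ then contributes the strictly negative mass $a_{i_0}^{2}\lambda_{i_0}$ at the positive location $\lambda_{i_0}^{2}$.

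The key step is to rule out any cancellation of this negative mass by other pairs landing at the same location. Distinctness of the $\lambda_\ell$'s already excludes any pair $(j,k)$ with $j=i_0$ or $k=i_0$ other than $(i_0,i_0)$ itself, since $\lambda_{i_0}\lambda_k=\lambda_{i_0}^{2}$ forces $\lambda_k=\lambda_{i_0}$. The standing hypothesis, read as saying that $\lambda_{i_0}^{2}\neq \lambda_j\lambda_k$ for all $j,k\neq i_0$, eliminates every remaining pair. Consequently the total coefficient of $\mu*t\mu$ at $\lambda_{i_0}^{2}$ is exactly $a_{i_0}^{2}\lambda_{i_0}<0$, contradicting the non-negativity of $\nu*\nu$.

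The main obstacle, really a clean bookkeeping issue, lies in this last paragraph: one must enumerate the pairs $(j,k)$ that can possibly produce the location $\lambda_{i_0}^{2}$ and verify that the hypothesis forbids all of them except the offending diagonal pair. Once that enumeration is in place, the sign contradiction is immediate, and Proposition~\ref{lemmeconvolution} supplies the bridge between the geometric statement ``$\tilde W_\alpha$ is Hamburger type'' and the algebraic identity $\nu*\nu=\mu*t\mu$.
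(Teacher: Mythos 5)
Your proof is correct and follows essentially the same route as the paper's: compute $\mu*t\mu=\sum_{i,j}a_ia_j\lambda_j\delta_{\lambda_i\lambda_j}$, use non-subnormality to produce an atom $\lambda_{i_0}<0$, and derive the contradiction from the strictly negative mass $a_{i_0}^2\lambda_{i_0}$ at $\lambda_{i_0}^2$ against the non-negativity of $\nu*\nu$ supplied by Proposition~\ref{lemmeconvolution}. In fact you are more careful than the paper, which leaves implicit the enumeration showing that no other pair $(j,k)$ can land at $\lambda_{i_0}^2$ and cancel the negative mass; your reading of the (somewhat garbled) hypothesis as $\lambda_{i_0}^2\neq\lambda_j\lambda_k$ is exactly what the argument needs.
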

\begin{proof}  We have $
\mu:=\sum\limits_0^ka_i\delta_{\lambda_i}
$
for some $a_i> 0$,  and $\lambda_i\in {\mathbb R}$. We conclude by observing that
\begin{equation}\label{hamb}
\mu*t\mu = \sum\limits_{i,j =0}^ka_ia_j\lambda_j\delta_{\lambda_i\lambda_j} \end{equation}
Indeed, suppose  ${\tilde W}_\alpha$ is of Hamburger type,  We obtain  the measure of $\mu*t\mu$ is non negative, since $W_\alpha$ is non subnormal there is  $\lambda_i<0$. Then  the quantity  $a_i^2\lambda_i\delta_{\lambda_i^2} $ in the expression of $\mu*t\mu$,  contradicts $\mu$ is non negative.  
\end{proof}

Concerning the problem of preservation of Hamburger type property by the Aluthge transform  in  {\bf (HP)}, we have the following result.
\begin{thm}\label{fibo}
Let $W_\alpha$ be a Hamburger recursively generated weighted shift such that the associated measure has four atoms.
Then  ${\tilde W}_\alpha$  is not a Hamburger type shift.
\end{thm}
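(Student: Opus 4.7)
My plan combines Corollary \ref{cor3.2} (which forces the atoms of any Hamburger representing measure $\nu$ of $\tilde W_\alpha$ to have absolute values equal to those of the atoms of $\mu$), Theorem \ref{fib} (which handles the ``generic'' non-subnormal sign configurations), and a direct matching of the atomic decompositions of $\nu*\nu$ and $\mu*t\mu$. Write $\mu=\sum_{i=0}^3 a_i\delta_{\lambda_i}$ with $a_i>0$ and $\lambda_0<\lambda_1<\lambda_2<\lambda_3$; suppose, for contradiction, that $\tilde W_\alpha$ is of Hamburger type, so by Proposition \ref{lemmeconvolution} there exists a probability measure $\nu$ with $\nu*\nu=\mu*t\mu$.

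First I dispose of the easy sign configurations. If $W_\alpha$ is non-subnormal with exactly one negative atom, necessarily $\lambda_0$, then for any $j,k\in\{1,2,3\}$ one has $\lambda_j\lambda_k\ge 0>\lambda_0$, so the hypothesis of Theorem \ref{fib} holds and $\tilde W_\alpha$ is not Hamburger, contradicting our assumption. Configurations with three or four negative atoms are ruled out because $\gamma_1=\sum a_i\lambda_i>0$. Only two cases remain: the subnormal case (all $\lambda_i\ge 0$) and the two-negative case $\lambda_0<\lambda_1<0<\lambda_2<\lambda_3$.

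In the subnormal case, $\mu*t\mu$ is $\mathbb{R}^+$-supported, which forces $\nu$ to be $\mathbb{R}^+$-supported as well (a negative atom of $\nu$ would produce a negative mixed product in $\nu*\nu$); combined with Corollary \ref{cor3.2}, this gives $\nu=\sum b_i\delta_{\lambda_i}$ with four atoms located exactly at those of $\mu$. Matching the diagonal position $\lambda_i^2$ in the identity $\nu*\nu=\mu*t\mu$ forces $b_i=a_i\sqrt{\lambda_i}$, while an off-diagonal position $\lambda_i\lambda_j$ ($i\ne j$) requires $2b_ib_j=a_ia_j(\lambda_i+\lambda_j)$, i.e.\ $2\sqrt{\lambda_i\lambda_j}=\lambda_i+\lambda_j$, which is impossible for distinct positive $\lambda_i\ne\lambda_j$ by the strict AM-GM inequality. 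In the two-negative case, Corollary \ref{cor3.2} allows the atoms of $\nu$ to be of the form $\pm|\lambda_i|$; I would examine the three positive positions $\lambda_0^2,\lambda_1^2,\lambda_0\lambda_1$ of $\mu*t\mu$, where the contributions $a_0^2\lambda_0$, $a_1^2\lambda_1$, $a_0a_1(\lambda_0+\lambda_1)$ are all strictly negative and must therefore be cancelled by other pair-products landing at the same positions, and combine an analogous AM-GM-type weight analysis with the positivity $\gamma_n>0$ for all $n$ (which forces $\lambda_3\ge|\lambda_0|$ and further sign restrictions on linear combinations of the $a_i$) to rule out each admissible sign assignment for the atoms of $\nu$.

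The main obstacle will be the bookkeeping in the two-negative case, where the sign choices for $\nu$'s atoms give rise to several subcases; additional care is also needed in both remaining cases when pair-products $\lambda_i\lambda_j$ accidentally coincide (forcing the matching system to be modified), but each coincidence should again reduce to an AM-GM or positivity obstruction.
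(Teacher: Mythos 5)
Your overall strategy — reduce to $\nu*\nu=\mu*t\mu$ via Proposition \ref{lemmeconvolution}, locate the atoms of $\nu$ with Corollary \ref{cor3.2}, and match masses position by position — is exactly the paper's, but two of your case reductions are wrong, and the case you discard first is precisely where the paper does almost all of its work. The dismissal of the one-negative-atom configuration via Theorem \ref{fib} does not stand: the hypothesis of that theorem, as written, is a misprint, and its proof shows the condition actually needed is that the negative atom's \emph{square} $\lambda_i^2$ is not equal to any other product $\lambda_j\lambda_k$ (otherwise the negative mass $a_i^2\lambda_i\delta_{\lambda_i^2}$ can be cancelled). Read literally, your version of the hypothesis would apply to $\mu=a\delta_{-p}+b\delta_0+c\delta_p$ and force $\tilde W_\alpha$ to be non-Hamburger, flatly contradicting Theorem 2.1 of the paper, where this very $\mu$ yields a \emph{subnormal} Aluthge transform. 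Under the correct reading, the hypothesis of Theorem \ref{fib} fails exactly when $\lambda_1=-\lambda_0$ (or when $\lambda_0^2=\lambda_j\lambda_k$ for a positive pair, e.g. $\lambda_0=-2$, $\{1,4\}\subset\{\lambda_1,\lambda_2,\lambda_3\}$). The paper's case $\lambda_1<0\le\lambda_2<\lambda_3<\lambda_4$ first proves that positivity of $\mu*t\mu$ forces $\lambda_1=-\lambda_2$, and then must rule out four sign assignments for the atoms $\pm\lambda_2,\pm\lambda_3,\pm\lambda_4$ of $\nu$ by comparing supports. None of this is optional, and your proposal skips it entirely.

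The second error is the claim that three negative atoms are excluded by $\gamma_1=\sum a_i\lambda_i>0$: take $\mu=0.1\,\delta_{-3}+0.1\,\delta_{-2}+0.1\,\delta_{-1}+0.7\,\delta_{4}$, which is a nonnegative compactly supported four-atom measure with all moments $\gamma_n>0$, so this configuration is admissible and must be argued away (the paper does so by forcing $\lambda_4=-\lambda_1$ and then exhibiting an uncancellable negative mass at $\lambda_2\lambda_3$). Your two-negative case is only a plan, whereas the paper pins down $\lambda_4=-\lambda_1$, $\lambda_3=-\lambda_2$ and solves the resulting system to reach $\mu*t\mu=0$, contradicting Proposition \ref{lemmeconvolution}. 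On the positive side, your AM--GM argument in the all-nonnegative case ($2b_ib_j=a_ia_j(\lambda_i+\lambda_j)$ with $b_i=a_i\sqrt{\lambda_i}$ being impossible for distinct $\lambda_i$) is a genuinely different and more self-contained route than the paper's appeal to an external recursiveness result, and the coincidence issue you flag there can be handled by summing the strict AM--GM inequalities over pairs landing at the same position; but as it stands the proposal proves the theorem only in the one case that is arguably peripheral.
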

\begin{proof} 
We pose  $\mu = a_1\delta_{\lambda_1} +  a_2\delta_{\lambda_2}+  a_3\delta_{\lambda_3}+ a_4\delta_{\lambda_4}$, with $\lambda_1< \lambda_2< \lambda_3< \lambda_4$ and $a_i>0$ for $1\le i\le 4$. We will distinguish various cases. 
\begin{enumerate}
    \item $supp(\mu) =\{\lambda_1,\lambda_2,\lambda_3,\lambda_4\}\subset {\mathbb R}_+.$ We will get $W_\alpha$ is subnormal, and hence from \cite[Proposition 4.6]{brz} is not recursive. We conclude from Corollary \ref{cor3.2} that ${\tilde W}_\alpha$  is not a Hamburger type.
    \item $\lambda_1< 0 \le \lambda_2< \lambda_3< \lambda_4$. Assume that ${\tilde W}_\alpha$  is Hamburger type and let $\nu$ be the associated measure. 
    
    Writing $$\nu*\nu=\mu*t\mu =  a_1^2\lambda_1\delta_{\lambda_1^2} +  a_2^2\lambda_2\delta_{\lambda_2^2} +  a_3^2\lambda_3\delta_{\lambda_3^2} +  a_4^2\lambda_4\delta_{\lambda_4^2} +\sum\limits_{1\le i<j\le 4}  a_ia_j(\lambda_i+\lambda_j)\delta_{\lambda_i\lambda_j}, 
    $$
 we obtain   $\mu*t\mu$ is non negative, and then we derive, $-\lambda_1\le \lambda_2$. If $-\lambda_1< \lambda_2$, we will contradict again $\mu*t\mu$ is non negative, because of the term $a_1^2\lambda_1\delta_{\lambda_1^2}$ in  $\mu*t\mu$. Thus $\lambda_1 = -\lambda_2$. It will come 
$$\begin{array}{cl}
\mu*t\mu = & (a_2^2-a_1^2)\lambda_2\delta_{\lambda_2^2} +  a_3^2\lambda_3\delta_{\lambda_3^2} +  a_4^2\lambda_4\delta_{\lambda_4^2} +  a_1a_3(\lambda_3-\lambda_2)\delta_{-\lambda_2\lambda_3}  + a_1a_4(\lambda_4-\lambda_2)\delta_{-\lambda_2\lambda_4}\\ & +  a_2a_3(\lambda_3+\lambda_2)\delta_{\lambda_2\lambda_3}
+ a_2a_4(\lambda_4+\lambda_2)\delta_{\lambda_2\lambda_4}+ a_4a_3(\lambda_3+\lambda_4)\delta_{\lambda_3\lambda_4}
\end{array}
$$
In particular, 
\begin{equation*}
    \{\lambda_3^2,\lambda_4^2,\pm\lambda_2\lambda_3,\pm\lambda_2\lambda_4,\lambda_3\lambda_4\}\subset supp(\mu*t\mu)\subset \{\lambda_2^2, \lambda_3^2,\lambda_4^2,\pm\lambda_2\lambda_3,\pm\lambda_2\lambda_4,\lambda_3\lambda_4\}
\end{equation*}
 Using  Corollary  \ref{cor3.2}, we get
    $$\nu =  b_2\delta_{\lambda_2}+  b_3\delta_{\lambda_3}+ b_4\delta_{\lambda_4}
    +  c_2\delta_{-\lambda_2}+  c_3\delta_{-\lambda_3}+ c_4\delta_{-\lambda_4}$$
    with $b_i\ge 0$ and $c_i\ge 0$ for $2\le i\le 4$.
    
    From $\nu*\nu=\mu*t\mu$ we deduce that :
  \begin{equation}\label{supppp}
    \{\lambda_3^2,\lambda_4^2,\pm\lambda_2\lambda_3,\pm\lambda_2\lambda_4,\lambda_3\lambda_4\}\subset supp(\nu*\nu)\subset \{\lambda_2^2, \lambda_3^2,\lambda_4^2,\pm\lambda_2\lambda_3,\pm\lambda_2\lambda_4,\lambda_3\lambda_4\}
\end{equation}  
But,  
\begin{align*}
    \nu*\nu=& (b_2^2+c_2^2) \delta_{\lambda_2^2}+(b_3^2+c_3^2) \delta_{\lambda_3^2}+(b_4^2+c_4^2) \delta_{\lambda_4^2}+2[(b_2b_3+c_2c_3)\delta_{\lambda_2\lambda_3}+(b_3b_4+c_3c_4)\delta_{\lambda_3\lambda_4}\\&+(b_2b_4+c_2c_4)\delta_{\lambda_2\lambda_4}+(b_3c_2+b_2c_3)\delta_{-\lambda_2\lambda_3}+(b_3c_4+c_3b_4)\delta_{-\lambda_3\lambda_4}+(b_2c_4+c_2b_4)\delta_{-\lambda_2\lambda_4}\\&+b_2c_2\delta_{-\lambda_2^2}+b_3c_3\delta_{-\lambda_3^2}+b_4c_4\delta_{-\lambda_4^2}]
\end{align*}
We derive the next equations :
\begin{equation*}
    \left\{\begin{array}{l}
        b_4c_4=0   \\
         b_3c_4+c_3b_4=0\\
         b_2c_2=0
    \end{array} \right.
\end{equation*}
this is equivalent to

$$
         b_2c_2=0 \mbox{ and } (  (b_3,b_4)=(0,0)  \mbox{ or } 
         (c_3,c_4)=(0,0)).
$$

we derive   four different cases 
\begin{description}
    \item[I)-$(b_2,b_3,b_4)=(0,0,0)$] in this case $supp(\nu)\subset \mathbb{R}_-$, this implies that $supp(\nu*\nu)\subset \mathbb{R}_+$ which is impossible by \eqref{supppp}.
    \item[II)-$(c_2,c_3,c_4)=(0,0,0)$] Similarly in this case $supp(\nu)\subset \mathbb{R}_+$, this implies that $supp(\nu*\nu)\subset \mathbb{R}_+$ which is impossible by \eqref{supppp}.
    \item[III)-$(b_2,c_3,c_4)=(0,0,0)$] In this case $\nu=c_2\delta_{-\lambda_2}+b_3\delta_{\lambda_3}+b_4\delta_{\lambda_4}$, this implies that $\lambda_2\lambda_3\not\in supp(\nu*\nu)$ which contradict \eqref{supppp}.
    \item[IV)-$(c_2,b_3,b_4)=(0,0,0)$] Similarly in this case $\nu=b_2\delta_{\lambda_2}+c_3\delta_{-\lambda_3}+c_4\delta_{-\lambda_4}$, this implies that $\lambda_2\lambda_3\not\in supp(\nu*\nu)$ which contradict \eqref{supppp}.
\end{description}
We conclude from this discussion that $\tilde{W}_\alpha$ is not of Hamburger type.
\item $\lambda_1< \lambda_2 < 0 \le \lambda_3< \lambda_4$. Assume that ${\tilde W}_\alpha$  is Hamburger type and let $\nu$ be the associated measure. 

As in the second case, we drive that $\lambda_3\ge -\lambda_2$, and $\lambda_4\ge -\lambda_1$, if $\lambda_4>-\lambda_1$, because of the term $a_1^2\lambda_1\delta_{\lambda_1^2}$ in $\mu*t\mu$ this one will be non positive measure. Thus $\lambda_4=-\lambda_1$. Similarly, we get that $\lambda_3=-\lambda_2$. Write $\mu= a_1\delta_{-\lambda_4}+a_2\delta_{-\lambda_3} +
a_3\delta_{\lambda_3} + a_4\delta_{\lambda_4}$ and then 
$\nu= b_1\delta_{-\lambda_4}+b_2\delta_{-\lambda_3} +
b_3\delta_{\lambda_3} + b_4\delta_{\lambda_4}$. it will follow, 
\begin{align*}
    \mu*t\mu=&\lambda_4(a_4^2-a_1^2)\delta_{\lambda_4^2}+\lambda_3(a_3^2-a_2^2)\delta_{\lambda_3^2}\\&+(\lambda_3+\lambda_4)(a_3a_4-a_1a_2)\delta_{\lambda_3\lambda_4}+(\lambda_4-\lambda_3)(a_2a_4-a_1a_3)\delta_{-\lambda_3\lambda_4},
\end{align*}
and
\begin{align*}
    \nu*\nu=&(b_1^2+b_3^2)\delta_{\lambda_{4}^2}+(b_2^2+b_4^2)\delta_{\lambda_3^2}+2(b_1b_2+b_3b_4)\delta_{\lambda_3\lambda_4}+2(b_1b_3+b_2b_4)\delta_{-\lambda_3\lambda_4}\\&+2b_1b_4\delta_{-\lambda_4^2}+2b_2b_3\delta_{-\lambda_3^2}.
\end{align*}
Since $\{-\lambda_4^2,-\lambda_3^2\}\not \in supp(\mu)$, we get that
\begin{equation*}
    \left\{\begin{array}{l}
        b_1b_4=0   \\
        b_2b_3=0\\
    \end{array} \right.
\end{equation*}
As for the second case we have four cases, $(b_1,b_2)=(0,0)$, $(b_1,b_3)=(0,0)$, $(b_4,b_2)=(0,0)$, and $(b_4,b_3)=(0,0)$. The proof run similarly for the four cases.

For $(b_1,b_2)=(0,0)$,    from the equality $\nu*\nu=\mu*t\mu$ we derive that
 \begin{equation*}
    \left\{\begin{array}{l}
        \lambda_4(a_4^2-a_1^2)=b_3^2   \\
        \lambda_3(a_3^2-a_2^2)=b_4^2 \\
        (\lambda_3+\lambda_4)(a_3a_4-a_1a_2)=2b_3b_4\\
        (\lambda_4-\lambda_3)(a_2a_4-a_1a_3)=0
    \end{array} \right.
\end{equation*}   
Since $\lambda_3>0$ and $\lambda_4>0$. This implies that
 \begin{equation*}
    \left\{\begin{array}{l}
        (\lambda_3+\lambda_4)^2(a_3a_4-a_1a_2)^2=4pq(a_4^2-a_1^2)(a_3^2-a_2^2)\\
       a_2a_4-a_1a_3=0
    \end{array} \right.
\end{equation*} 
We get 
$$
(\lambda_3+\lambda_4)^2(a_3a_4-a_1a_2)^2=4pq(a_3a_4-a_1a_2)^2\iff (\lambda_3-\lambda_4)^2(a_3a_4-a_1a_2)^2=0
$$
Using the third equation we conclude that $b_3b_4=0$. Finally, $\mu*t\mu=0$, which is impossible by Proposition \ref{lemmeconvolution}.
\item $\lambda_1<  \lambda_2< \lambda_3< 0 \le \lambda_4$. Assume that ${\tilde W}_\alpha$  is Hamburger type and let $\nu$ be the associated measure. 

As in the second case, we drive that $\lambda_4\ge -\lambda_1$, if $\lambda_4>-\lambda_1$, because of the term $a_1^2\lambda_1\delta_{\lambda_1^2}$ in $\mu*t\mu$ this one will be non positive measure. Thus $\lambda_4=-\lambda_1$. Similarly, because of the term $a_2a_3(\lambda_2+\lambda_3)\delta_{\lambda_2\lambda_3}$ in $\mu*t\mu$ this one will be non positive measure.
\end{enumerate}
\end{proof}

\begin{rem} \begin{enumerate}
    \item Theorem \ref{fib} together with \cite[Corollary 4.11]{brz} shed some light on questions (HP) and (SP) in \cite{Ex3} concerning the preservation of subnormality and Hamburger type property by the Aluthge transform of weighted shifts.
\item Theorem \ref{fibo} allows to produce easily  a whole class of examples of Hamburger type shift for which the Aluthge transform is not of Hamburger type.
\end{enumerate}
\end{rem}
\section{$k$-Jumping flatness property for   weighted shifts}
In various research papers related to the subnormal completion problem  in one variable, the recursiveness plays a central role in the explicit calculation of the subnormal completion of weighted shifts (see \cite{brz, cf3}). A sequence is recursive  when it    satisfies following recursive relation,
 %%%%%%%%%%%%%%%%%%%%%%%
\begin{equation}\label{test,1}
 \gamma_{n+1}=a_0\gamma_{n}+a_1\gamma_{n-1}+\cdots+a_{r-1}\gamma_{n-r+1} \: \: \mbox{ for every } \:n\geq r,
\end{equation}
%%%%%%%%%%%%%%%%%%%%%%%%%%
where the coefficients $a_0,a_1,\cdots , a_{r-1}$  are some fixed numbers and $\gamma_0, \gamma_1,\cdots,\gamma_{r-1}$ are the initial conditions. When $\gamma$ is recursive, we say that the associated weighted shift $W_\alpha$ is {\it recursively generated weighted shift}.  The polynomial $P(z) =z^{r}-a_{0}z^{r-1}-...-a_{r-2}z-a_{r-1}$, is said to be generating for $\gamma$.
 {\it  The characteristic polynomial} of $\gamma$ is the unique  generating polynomial  $P_\gamma(z)$  with minimal degree. It is usefully applied in the determination
of the explicit expression of the general term $\gamma_n$, by considering the
(characteristic) roots
$ \lambda_{1},\ \lambda_{2},\ ...,\ \lambda_{s}$   of $P_\gamma(z)$, with  multiplicities $m_{1},\,
m_{2}\, ...,\, m_{s}$ (respectively).\\
For weighted shifts, we have the following
\begin{prop} Let $W_\alpha $  be recursively generated weighted shift with characteristic polynomial $P_\gamma$  . Then, the following are equivalent
\begin{enumerate}
    \item  $P_\gamma$ has only simple roots $\lambda_1, \cdots, \lambda_r$.
\item $W_\alpha $  admits a representing measure $\mu = c_1\delta_{\lambda_1} + \cdots + c_r\delta_{\lambda_r}$
for some suitable constants $c_1, \cdots, c_r.$
\end{enumerate}
\end{prop}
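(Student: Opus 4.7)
The plan is to exploit the standard theory of linear recurrences with constant coefficients, which gives an explicit closed form for $\gamma_n$ in terms of the (distinct) roots of the characteristic polynomial $P_\gamma$, and then match this closed form against the moments of an atomic measure supported on those roots.

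For $(1)\Rightarrow (2)$, I would assume $P_\gamma$ has simple roots $\lambda_1,\ldots,\lambda_r$. Since $\gamma$ satisfies the recurrence \eqref{test,1} with characteristic polynomial $P_\gamma$, the standard theory of linear recurrences yields $\gamma_n = c_1\lambda_1^n+\cdots+c_r\lambda_r^n$ for every $n\ge 0$, where $c_1,\ldots,c_r$ are uniquely determined from the initial data $\gamma_0,\ldots,\gamma_{r-1}$ by solving the Vandermonde system associated to $\lambda_1,\ldots,\lambda_r$ (which is invertible precisely because the $\lambda_i$ are distinct). Setting $\mu:=\sum_{i=1}^r c_i\delta_{\lambda_i}$, one checks $\int t^n\,d\mu(t)=\sum_i c_i\lambda_i^n=\gamma_n$ for every $n\geq 0$, so $\mu$ is a representing charge for $\gamma$.

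For $(2)\Rightarrow (1)$, suppose $W_\alpha$ admits a representing measure $\mu=\sum_{i=1}^r c_i\delta_{\lambda_i}$, where we may assume (after relabeling) that all $c_i\neq 0$ and all $\lambda_i$ are distinct. Then $\gamma_n=\sum_i c_i\lambda_i^n$, and this sequence is annihilated by $Q(z):=\prod_{i=1}^r(z-\lambda_i)$, in the sense that $\gamma$ satisfies a linear recurrence whose characteristic polynomial is $Q$. By the minimality in the definition of $P_\gamma$, the polynomial $P_\gamma$ must divide $Q$; in particular $P_\gamma$ has only simple roots, each belonging to $\{\lambda_1,\ldots,\lambda_r\}$. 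For the reverse inclusion, I would invoke the linear independence of the geometric sequences $\{\lambda_i^n\}_n$: if some $\lambda_{i_0}$ with $c_{i_0}\neq 0$ were not a root of $P_\gamma$, then applying $P_\gamma$ as a shift operator to $\gamma$ would leave a nonzero multiple of $\lambda_{i_0}^n$ alive, contradicting the fact that $P_\gamma$ annihilates $\gamma$. Hence $P_\gamma=Q$ and $P_\gamma$ has exactly the simple roots $\lambda_1,\ldots,\lambda_r$.

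The main subtlety I anticipate is not the algebraic core (which is routine $C$-finite sequence theory) but the bookkeeping around ``redundant'' atoms, i.e.\ the possibility that some $c_i$ vanish or that some $\lambda_i$ coincide in the hypothesis of $(2)$. I would dispose of this by first passing to the unique minimal atomic representation of $\mu$ before matching its cardinality against $\deg P_\gamma$, so that the degree count and the root count agree on the nose.
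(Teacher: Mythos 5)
The paper states this proposition without any proof, treating it as a known consequence of the theory of recursively generated sequences (essentially \cite{cf91} and \cite{crz}), so there is no in-paper argument to compare against; your proposal supplies the standard $C$-finite-sequence argument, and it is essentially correct. The forward direction via the Vandermonde system, and the reverse direction via the ideal of annihilating polynomials (minimality of $P_\gamma$ forces $P_\gamma\mid\prod_i(z-\lambda_i)$) together with linear independence of the geometric sequences $\{\lambda_i^n\}_n$ to get the opposite divisibility, is exactly the right skeleton, and your remark about first reducing $\mu$ to its minimal atomic form (discarding $c_i=0$ and merging coincident atoms) addresses the only real bookkeeping issue.

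One caveat worth recording, which affects the statement as much as your proof: in the implication $(1)\Rightarrow(2)$ you tacitly assume the simple roots $\lambda_1,\ldots,\lambda_r$ are \emph{real}, since otherwise $\sum_i c_i\delta_{\lambda_i}$ is not a representing charge in the sense of \eqref{cmp}. A positive moment sequence of a bounded weighted shift can perfectly well have a characteristic polynomial with simple non-real roots (e.g.\ $\gamma_n$ proportional to $4^n+\mathrm{Re}\,(1+i)^n$), in which case $(1)$ holds but $(2)$ fails for a real measure supported on the roots. So either the proposition should be read with ``simple \emph{real} roots'' in item $(1)$, or one must allow complex atoms; under either reading your argument goes through unchanged. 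Apart from this point, which is inherited from the proposition's phrasing rather than introduced by you, the proof is complete.
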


Notice in passing that
\begin{prop} Let $W_\alpha $  be a
 weighted shift with jumping flatness property and set $p=\alpha_1\alpha_2$, Then $\gamma$ satisfies the next recursive relation $\gamma_{n+3} = p^2\gamma_{n+1} $ for every $n\in{\mathbb Z}_+$. In particular, we have
\begin{enumerate}
    \item If $W_\alpha $ has jumping flatness of type I, then $ P_\gamma(X)= X(X-p)(X+p)$
    \item If $W_\alpha $ has jumping flatness of type II, then $ P_\gamma(X)= (X-p)(X+p)$
\end{enumerate}

\end{prop}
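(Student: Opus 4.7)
The plan is to derive the recursion $\gamma_{n+3} = p^2 \gamma_{n+1}$ by a direct computation from the moment formula, and then read off the characteristic polynomial in each of the two cases.

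Starting from $\gamma_n = \alpha_0^2 \alpha_1^2 \cdots \alpha_{n-1}^2$, one has $\gamma_{n+3}/\gamma_{n+1} = \alpha_{n+1}^2 \alpha_{n+2}^2$ for every $n \ge 0$. The jumping flatness hypothesis $\alpha_k = \alpha_{k+2}$ ($k\ge 1$) makes the restricted sequence $(\alpha_k)_{k\ge 1}$ two-periodic: $\alpha_k = \alpha_1$ for $k$ odd and $\alpha_k = \alpha_2$ for $k$ even with $k\ge 2$. Consequently, for every $n\ge 0$ the unordered pair $\{\alpha_{n+1},\alpha_{n+2}\}$ equals $\{\alpha_1,\alpha_2\}$, so $\alpha_{n+1}\alpha_{n+2} = \alpha_1\alpha_2 = p$. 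This yields $\gamma_{n+3} = p^2\gamma_{n+1}$ and the associated generating polynomial $X^3 - p^2 X = X(X-p)(X+p)$.

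For type II, the extra identity $\alpha_0 = \alpha_2$ extends the two-periodicity all the way down to index $0$, so that $\alpha_n\alpha_{n+1} = p$ already for $n\ge 0$ and the same computation yields the strictly shorter recursion $\gamma_{n+2} = p^2\gamma_n$ for every $n\ge 0$, with generating polynomial $(X-p)(X+p)$. For type I, this shorter recursion fails already at $n=0$: indeed $\gamma_2 = \alpha_0^2\alpha_1^2 < \alpha_1^2\alpha_2^2 = p^2 = p^2\gamma_0$ since $\alpha_0 < \alpha_2$. So in type I the degree-$3$ relation is best possible along the natural one-parameter family $X^2-c$, and it remains to verify minimality among all degree-$2$ generating polynomials.

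The main obstacle is precisely this minimality check in type I. The cleanest approach is to substitute a general candidate $\gamma_{n+2} = a\gamma_{n+1} + b\gamma_n$ at $n=0,1,2$ using the explicit values $\gamma_0 = 1$, $\gamma_1 = \alpha_0^2$, $\gamma_2 = \alpha_0^2\alpha_1^2$, $\gamma_3 = p^2\alpha_0^2$, $\gamma_4 = p^2\alpha_0^2\alpha_1^2$, and show that the resulting linear system forces, generically, $a=0$ and $b=p^2$, which then reduces to $\alpha_0^2\alpha_1^2 = p^2$ and contradicts $\alpha_0 < \alpha_2$. A more conceptual alternative is to invoke the proposition stated just before, identifying the roots of $P_\gamma$ with the atoms of the unique representing measure: one exhibits directly the three-atomic measure $a\delta_0 + c_1\delta_p + c_2\delta_{-p}$ with $a = 1 - \alpha_0^2/\alpha_2^2 > 0$ (in type I) or with $a=0$ (in type II) reproducing the moments, which immediately pins down $P_\gamma$ as $X(X-p)(X+p)$ and $(X-p)(X+p)$ respectively.
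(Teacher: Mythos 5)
Your derivation of the recursion $\gamma_{n+3}=p^{2}\gamma_{n+1}$ is complete and correct, and since the paper states this proposition without proof (it is introduced by ``Notice in passing that'' and followed immediately by the corollary), there is no argument of the authors' to compare yours against: the computation $\gamma_{n+3}/\gamma_{n+1}=\alpha_{n+1}^{2}\alpha_{n+2}^{2}$ together with the two-periodicity of $(\alpha_k)_{k\ge 1}$ is exactly what is needed. Your second suggested route for identifying $P_\gamma$ --- exhibiting the charge $c_0\delta_0+c_1\delta_p+c_2\delta_{-p}$ with $c_0=1-\alpha_0^{2}/\alpha_2^{2}$ --- is also the one most consistent with how the corollary that follows is used, and the weights do check out: $\gamma_1$ and $\gamma_2$ give $c_1-c_2=\alpha_0^{2}/p$ and $c_1+c_2=\alpha_0^{2}/\alpha_2^{2}$, and the recursion propagates the moment identity to all $n\ge 1$.

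The one place you stop short is the minimality check in type I, and the word ``generically'' is hiding a genuine case. Carrying out your own linear system, the equations at $n=1$ and $n=2$ combine to $a\,(p^{2}-\alpha_1^{4})=a\,\alpha_1^{2}(\alpha_2^{2}-\alpha_1^{2})=0$, so the forced conclusion $a=0$, $b=p^{2}$ (and hence the contradiction $\alpha_0=\alpha_2$ from the equation at $n=0$) only follows when $\alpha_1\neq\alpha_2$. If $\alpha_1=\alpha_2$, the weights are constant from index $1$ on, so $\gamma_{n+2}=p\,\gamma_{n+1}$ for all $n\ge 0$; then $X(X-p)$ is already a generating polynomial and $P_\gamma=X(X-p)$ in type I (and $P_\gamma=X-p$ in type II, where the shift is flat), so the stated conclusion actually fails. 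Equivalently, in your measure-theoretic alternative the coefficient $c_2=\frac{\alpha_0^{2}}{2}\bigl(\frac{1}{\alpha_2^{2}}-\frac{1}{p}\bigr)$ vanishes precisely when $\alpha_1=\alpha_2$, so $-p$ is not an atom and the identification of $P_\gamma$ with the announced cubic breaks down. The proposition is therefore implicitly assuming $\alpha_1\neq\alpha_2$ (consistent with the paper's remark that $k$-jumping flatness is only of interest for non-flat shifts); under that standing assumption your argument closes, but you should either finish the linear-system computation and record the hypothesis $\alpha_1\neq\alpha_2$ explicitly, or note that minimality can fail without it.
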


We deduce the next corollary
\begin{cor}Let $W_\alpha $  be a
 weighted shift with jumping flatness property. Then $W_\alpha $ admits a fnite atomic  representing measure $\mu$. Moreover, 
 \begin{enumerate}
    \item If $W_\alpha $ has jumping flatness of type I, then $\mu = c_{-1}\delta_{-p} +c_0\delta_0+c_1\delta_p$
    \item If $W_\alpha $ has jumping flatness of type II, then  then $\mu = c_{-1}\delta_{-p} +c_1\delta_p.$
\end{enumerate}
for some suitable constants $c_{-1}, c_0$ and $c_1$ and with $p=\alpha_1\alpha_2$.
\end{cor}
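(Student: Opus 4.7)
The corollary is designed as an immediate consequence of the preceding proposition, so the plan is simply to combine the two parts of that proposition with the equivalence recalled just above it (simple roots of $P_\gamma$ $\Longleftrightarrow$ a finite atomic representing measure supported on those roots).

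First, by the preceding proposition I know that in the type~I case $P_\gamma(X) = X(X-p)(X+p)$ and in the type~II case $P_\gamma(X) = (X-p)(X+p)$, where $p = \alpha_1\alpha_2 > 0$. In either case the roots $\{-p,0,p\}$ (respectively $\{-p,p\}$) are pairwise distinct, so $P_\gamma$ has only simple roots. Applying the earlier proposition on recursively generated weighted shifts with simple characteristic roots then yields the existence of a finite atomic representing measure $\mu$ of the stated form.

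To pin down the constants, I would set up the Vandermonde system
\[
\gamma_k \;=\; c_{-1}(-p)^k + c_{0}\cdot 0^{k} + c_{1}\, p^k, \qquad k = 0,1,2,
\]
in the type~I case, and solve it using the first three moments; since the three atoms are distinct the Vandermonde determinant is nonzero, so the triple $(c_{-1},c_0,c_1)$ is uniquely determined by $(\gamma_0,\gamma_1,\gamma_2)$. The type~II case reduces to the analogous $2\times 2$ Vandermonde system in $\gamma_0,\gamma_1$, and the constants $(c_{-1},c_1)$ are likewise uniquely determined.

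There is no substantive obstacle here; the argument is a bookkeeping matter once the previous proposition is in hand. The only subtlety worth highlighting is that the constants $c_i$ need not be nonnegative in general: the hypothesis is only jumping flatness, not subnormality or Hamburger-type, so the representing measure is to be read in the charge sense of \eqref{cmp}. Positivity of the $c_i$ would require the additional assumption of $H(\infty)$, and is not claimed in the statement.
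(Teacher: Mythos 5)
Your proposal is correct and matches the paper's (implicit) argument: the paper simply states ``we deduce'' the corollary from the two preceding propositions, exactly the combination you make --- simple roots of $P_\gamma$ at $\{-p,0,p\}$ (resp.\ $\{-p,p\}$) plus the equivalence between simple characteristic roots and a finitely atomic representing measure supported on them. Your added remarks on the Vandermonde determination of the $c_i$ and on the measure being a charge rather than necessarily nonnegative are sound bookkeeping beyond what the paper records.
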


  Without loss of generality, we assume in the sequel that  $\gamma \equiv \{ \gamma_n \}_{n \in \mathbb{Z}_+}$ is a recursive sequence of order $r$
 satisfying \eqref{test,1} with   $a_{r-1} \neq 0$,
and  $\gamma_0, \gamma_1, \ldots, \gamma_{r-1}$ are the initial conditions.  
 Note that the Equality \eqref{test,1} yields that
  \begin{equation}\label{test,2}\begin{aligned}
    \gamma_{n -r +1} &= \frac{1}{a_{r-1}} \gamma_{n +1} -\frac{a_0}{a_{r-1}} \gamma_{n} -\ldots -\frac{a_{r-2}}{a_{r-1}} \gamma_{n-r+2} \\
                     &= \sum\limits_{t=1}^r a'_t \gamma_{n -r +1 +t}.
   \end{aligned}
  \end{equation}

 \begin{lem}\label{test,lem-1}
  Let $\gamma \equiv \{ \gamma_n \}_{n \in \mathbb{Z}_+}$ be as in \eqref{test,1}. If $M_{r-1}(2n_0) \geq 0$ for some integer $n_0 \geq 0$,
 then $M_\infty(\gamma) \geq 0$.
 \end{lem}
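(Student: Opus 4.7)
My plan is to reinterpret positivity of the Hankel blocks via the Riesz functional $L\colon\mathbb{R}[X]\to\mathbb{R}$ defined by $L(X^n):=\gamma_n$. A direct computation gives $v^T M_n(k)v=L\bigl(X^k p(X)^2\bigr)$ for $p(X)=\sum v_i X^i$, so $M_n(k)\ge 0$ is equivalent to $L(X^k s^2)\ge 0$ for every $s\in\mathbb{R}[X]$ with $\deg s\le n$. In this language, the hypothesis reads ``$L(X^{2n_0}q^2)\ge 0$ for every $q$ with $\deg q<r$,'' and the conclusion $M_\infty(\gamma)\ge 0$ reads ``$L(p^2)\ge 0$ for every $p\in\mathbb{R}[X]$.''

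I will exploit two consequences of the recursion \eqref{test,1}. Setting $P(X):=X^r-a_0 X^{r-1}-\cdots-a_{r-1}$, the recursion is equivalent to $L(P\cdot q)=0$ for every polynomial $q$. Moreover, $a_{r-1}\ne 0$ gives $P(0)\ne 0$, so $X$ is a unit in $\mathbb{R}[X]/(P)$; hence for every $s$ with $\deg s<r$ there is an $s'$ with $\deg s'<r$ satisfying $X^{n_0}s'\equiv s\pmod{P}$, say $X^{n_0}s'=s+Ph$. Squaring and applying $L$ yields the key identity
\begin{equation*}
L\bigl(X^{2n_0}(s')^2\bigr)=L(s^2)+2L(sPh)+L(P^2 h^2)=L(s^2),
\end{equation*}
since the last two terms have the form $L(P\cdot\text{polynomial})$ and therefore vanish. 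Combined with the hypothesis, this gives $L(s^2)\ge 0$ for every $s$ with $\deg s<r$, i.e.\ $M_{r-1}(0)\ge 0$.

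To extend this to arbitrary $s$, I will use Euclidean division $s=qP+\rho$ with $\deg\rho<r$, expand $s^2=q^2P^2+2qP\rho+\rho^2$, and invoke $L(P\cdot\text{anything})=0$ once more to obtain $L(s^2)=L(\rho^2)\ge 0$. The main obstacle I anticipate is the shift step: one must verify that a representative $s'$ of degree strictly less than $r$ can be chosen so that the hypothesis $M_{r-1}(2n_0)\ge 0$ truly applies. This is exactly what the invertibility of $X$ modulo $P$ (equivalently $a_{r-1}\ne 0$) guarantees; the remaining manipulations are routine polynomial arithmetic inside $\mathbb{R}[X]/(P)$.
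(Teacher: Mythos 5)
Your proof is correct, and it takes a genuinely different route from the paper's. The paper argues by induction on the size of the Hankel blocks: starting from $M_{r-1}(2n_0)\ge 0$ it shows, via an explicit substitution $y_i=2x_0a'_i+x_i$ that folds the outermost row and column of $M_r(2n_0-2)$ into the inner ones, that $M_{r-1}(2n_0)\ge 0$ forces $M_r(2n_0-2)\ge 0$ and $M_r(2n_0+2)\ge 0$, and then iterates (the forward recursion \eqref{test,1} in one direction, the backward recursion \eqref{test,2} --- which is where $a_{r-1}\ne 0$ enters --- in the other) until it reaches blocks based at $k=0$ of arbitrarily large size. Your argument packages the same mechanism algebraically: the two recursions together say that $L(P\cdot q)=0$ for all $q$ and that $X$ is a unit in $\mathbb{R}[X]/(P)$, so positivity of $L(X^{2n_0}(\cdot)^2)$ on degrees $<r$ is transported to positivity of $L((\cdot)^2)$ on degrees $<r$ in a single step, after which Euclidean division by $P$ disposes of all higher degrees at once. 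Your version is shorter, index-free, and makes the role of $a_{r-1}\ne 0$ conceptually transparent ($P(0)\ne 0$); the paper's version stays entirely inside the matrix language of the $H(n)$ properties used throughout Section 3 and exhibits the intermediate statements $M_{r-1+j}(2n_0\pm 2j)\ge 0$ along the way.

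One caveat worth making explicit: your cancellation of the cross terms $L(sPh)$ and $L(P^2h^2)$ uses $L(P\cdot q)=0$ for \emph{all} $q$, i.e.\ the recursion at the very first index, $\gamma_r=a_0\gamma_{r-1}+\cdots+a_{r-1}\gamma_0$. As literally written, \eqref{test,1} starts at $n=r$ and only yields $L(X^mP)=0$ for $m\ge 1$; under that reading your error term $(2s(0)h(0)+P(0)h(0)^2)L(P)$ need not vanish. Since the paper declares $\gamma_0,\dots,\gamma_{r-1}$ to be the initial conditions (so the recursion must already determine $\gamma_r$), the intended range is clearly $n\ge r-1$, and the paper's own proof makes the same tacit assumption; you should simply state that you are using this reading.
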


 \begin{proof}
   It suffice to show that if we have  $M_{r-1}(2n_0) \geq 0 $ then $ M_{r}(2n_0 -2) \geq 0$
  and $M_{r}(2n_0 +2) \geq 0$. To this aim, consider $\mathbf{x} = (x_0, x_1, \ldots, x_r) \in \mathbb{R}^{r+1}$. We have
  \begin{equation*}\begin{aligned}
    \mathbf{x}^T M_r(2n_0 -2) \mathbf{x} &= \sum\limits_{i, j=0}^r x_j \gamma_{2n_0 -2 +i +j} x_i\\
    &= \sum\limits_{i, j=1}^r x_j \gamma_{2n_0 -2 +i +j} x_i
       +\sum\limits_{i=0}^r x_0 \gamma_{2n_0 -2 +i} x_i
       +\sum\limits_{j=0}^r x_j \gamma_{2n_0 -2 +j} x_0
       +x_0 \gamma_{2n_0 -2 +i +j} x_0.\\&= \sum\limits_{i, j=1}^r x_j \gamma_{2n_0 -2 +i +j} x_i
       +2\sum\limits_{i=0}^r x_0 \gamma_{2n_0 -2 +i} x_i
       +x_0 \gamma_{2n_0 -2 +i +j} x_0   \end{aligned}
  \end{equation*}
  By virtue of \eqref{test,2}, one have
  \begin{equation*}\begin{aligned}
    \sum\limits_{i=1}^r x_0 \gamma_{2n_0 -2 +i} x_i &= \sum\limits_{i=1}^r x_0 (\sum\limits_{t=1}^r a'_t \gamma_{2n_0 -2 +i +t}) x_i
                                                      = \sum\limits_{i, j=1}^r x_0 a'_j \gamma_{2n_0 -2 +i +j} x_i;\\
    x_0 \gamma_{2n_0 -2} x_0 &= x_0( \sum\limits_{t=1}^r a'_t \gamma_{2n_0 -2 +t}) x_0 = x_0( \sum\limits_{i=1}^r a'_i \sum\limits_{j=1}^r a'_j \gamma_{2n_0 -2 +i +j}) x_0\\
                              &= \sum\limits_{i, j=1}^r x_0 a'_j \gamma_{2n_0 -2 +i +j} x_0 a'_i.
   \end{aligned}
  \end{equation*}
  Let $\mathbf{y} = (y_1, y_2, \ldots, y_{r-1}) \in \mathbb{R}^{r+1}$ be given by $y_i = 2x_0 a'_i +x_i$ for $j=0, 1, \ldots, r-1$.
  It follows from above  that $\mathbf{x}^T M_r(2n_0 -2) \mathbf{x} = \mathbf{y}^T M_{r-1}(2n_0) \mathbf{x}$. Since $M_r(2n_0 -2)$ is positive semidefinite, then so is  $M_{r-1}(2n_0)$.

  To show that $M_{r-1}(2n_0) \geq 0 \Rightarrow M_{r}(2n_0 +2) \geq 0$ one repeats the above proof and using  Equality \eqref{test,1} instead of  Equality \eqref{test,2}. This completes the proof.

 \end{proof}

 \begin{thm}\label{test,th-1}
   Let $W_\alpha$ be a weighted shift with property $H([\frac{3k}{2}] +1)$. If there exists  $n_0 \in \mathbb{Z}_+$ such that 
   $$\alpha_{n_0 +j} = \alpha_{n_0 +k +j} \: \: \: \: \: \: \: \: \: \: \: \: \: \; \text{ for }j=0, 1, \ldots, k -1,$$
then $W_\alpha$ has  $k$-jumping flatness property.
 \end{thm}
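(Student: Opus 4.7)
The plan is to convert the weight hypothesis into a shift-invariance of the moment sequence, propagate that invariance to every index using the positivity supplied by $H([3k/2]+1)$, and then read off $k$-jumping flatness from the resulting global moment recursion.

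First, I would set $c := \alpha_{n_0}^2 \alpha_{n_0+1}^2 \cdots \alpha_{n_0+k-1}^2 = \gamma_{n_0+k}/\gamma_{n_0}$. Using $\alpha_{n_0+j} = \alpha_{n_0+k+j}$ for $j = 0, \ldots, k-1$ together with the telescoping identity $\gamma_{m+1}/\gamma_m = \alpha_m^2$, a short computation yields the $k+1$ consecutive moment equalities
\[
\gamma_{n_0+j+k} = c\,\gamma_{n_0+j} \qquad (j = 0, 1, \ldots, k).
\]

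The core step is a positive-semidefinite propagation. Let $N := [3k/2]+1$. For an even integer $2s \geq 0$ and an integer $\ell$ with $\ell + k \leq N$, consider the vector $w^{(\ell)} \in \mathbb{R}^{N+1}$ whose only nonzero entries are $-c$ at position $\ell$ and $1$ at position $\ell+k$. A direct expansion gives
\[
(w^{(\ell)})^{T} M_N(2s)\,w^{(\ell)} = c^2\gamma_{2s+2\ell} - 2c\,\gamma_{2s+2\ell+k} + \gamma_{2s+2\ell+2k},
\]
and this quantity vanishes as soon as the relation $\gamma_{m+k}=c\gamma_m$ is already known at the two pivot indices $m = 2s+2\ell$ and $m = 2s+2\ell+k$. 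Since $M_N(2s) \geq 0$ by $H(N)$, the standard PSD fact $x^{T} A x = 0 \Rightarrow Ax = 0$ then forces $M_N(2s)\,w^{(\ell)} = 0$, which unpacks to $\gamma_{m+k}=c\gamma_m$ for every $m \in [2s+\ell,\, 2s+\ell+N]$. I would then iterate this step, first sweeping forward from the initial window $[n_0, n_0+k]$ to cover all $m \geq n_0$, and then sweeping backward to reach $m = 1$, by choosing $(s,\ell)$-pairs that always place the two pivots in the already-established territory.

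Finally, from $\gamma_{m+k} = c\gamma_m$ for all $m \geq 1$ one deduces $\prod_{j=0}^{k-1}\alpha_{m+j}^2 = c$ independently of $m \geq 1$; comparing this identity at $m+1$ and $m$ gives $\alpha_m = \alpha_{m+k}$ for every $m \geq 1$, i.e., the $k$-jumping flatness property. The hard part will be the combinatorial bookkeeping in the iteration: one must orchestrate the PSD applications so that the parity constraint on the shifts $2s$ furnished by $H(N)$ together with the size constraint $\ell \leq N - k = [k/2]+1$ still leaves enough room to reach every integer $m \geq 1$ in finitely many rounds. The specific value $N = [3k/2]+1$ is precisely what makes this scheme close up; a smaller $N$ would leave gaps that this PSD trick cannot bridge.
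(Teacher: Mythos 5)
Your reduction of the weight hypothesis to the moment identities $\gamma_{n_0+j+k}=c\,\gamma_{n_0+j}$ for $j=0,\dots,k$, and your final step recovering $\alpha_m=\alpha_{m+k}$ from a global relation $\gamma_{m+k}=c\,\gamma_m$ ($m\ge 1$), are both correct, and the kernel mechanism ($x^TAx=0$ with $A\ge0$ forces $Ax=0$) is sound. The fatal problem is the \emph{seeding} of the propagation, and it is a structural parity obstruction rather than bookkeeping. Your quadratic form equals $c^2\gamma_{A}-2c\gamma_{A+k}+\gamma_{A+2k}$ with $A=2s+2\ell$ necessarily \emph{even}, and forcing it to vanish requires the relation at both pivots $A$ and $A+k$; since the only pair of indices at distance $k$ inside the initial window $[n_0,n_0+k]$ is $(n_0,n_0+k)$, the first application needs $A=n_0$, which is impossible when $n_0$ is odd. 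No choice of vector repairs this: a quadratic form $w^TM_N(2s)w$ is the Riesz functional applied to $x^{2s}p(x)^2$ with $p(x)=\sum_i w_ix^i$, and it vanishes as a formal consequence of the initial relations only if $x^{2s}p(x)^2=b(x)(x^k-c)$ with $b$ supported in degrees $n_0,\dots,n_0+k$; since $c>0$ makes $x^k-c$ squarefree and prime to $x$, one gets $b=x^{2s}(x^k-c)p_1^2$, and comparing lowest and highest degrees of $b$ with the window $[n_0,n_0+k]$ pins the lowest degree to $n_0=2s+2(\mathrm{ord}_0\,p_1)$ --- again even. So with only $H([\frac{3k}{2}]+1)$ and the given weight equalities your method cannot produce a single kernel vector when $n_0$ is odd. (Even for $n_0$ even the deferred sweeps are delicate: for $k=1$ the forward window never grows, and the backward sweep to $m=1$ needs separate care near $s=0$.)

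This is exactly where the paper reaches for a stronger tool. It sets $m_0=2[\frac{n_0}{2}]$ and applies Smul'jan's theorem to $M_{[\frac{3k}{2}]+1}(m_0)$ to extract a full linear \emph{recursion} satisfied by $\gamma$ on a window around $n_0$ --- valid at all indices, odd and even alike, which is precisely the leverage your two-term kernel vectors lack. It then extends this recursion to a sequence $\tilde\gamma$ whose infinite Hankel matrix is positive (Lemma \ref{test,lem-1}), invokes the Curto--Fialkow finitely atomic representing measure, and uses the single scalar identity $\gamma_{n_0}\gamma_{n_0+2k}=\gamma_{n_0+k}^2$ to force the atoms to satisfy $\lambda_i^k=\lambda_j^k$; the outer and inner propagation of the weights is then read off from the explicit form of the measure. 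To salvage your route you would have to replace the rank-one degeneracy detected by $w^{(\ell)}$ with the column relations of $M_N(m_0)$ themselves, at which point you are essentially rebuilding the paper's argument.
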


 \begin{proof}
   (Outer propagation) Set $m_0 = 2[\frac{n_0}{2}]$.
   We have $M_{[\frac{3k}{2}] +1}(m_0) \geq 0$, then Smul'jan's theorem yields
  \begin{equation}\label{test,8}
    \gamma_{m_0 +[\frac{3k}{2}] +1 +i} = \sum_{j=0}^{[\frac{3k}{2}]} a_j \gamma_{m_0 +[\frac{3k}{2}] +i -j} \qquad ( i=0, 1 ,\dots, [\frac{3k}{2}] )
  \end{equation}
  for some real numbers $a_0, a_1, \dots, a_{[\frac{3k}{2}]}$. Let us consider the recursive sequence $\tilde{\gamma} = \{ \tilde{\gamma}_i \}_{i \in \mathbb{Z}_+}$ defined as follows
  \begin{equation}\label{test,7}
    \begin{cases}
      \tilde{\gamma}_l &= \gamma_l, \qquad l= m_0, m_0 +1, \ldots, m_0 +[\frac{3k}{2}]; \\
      \tilde{\gamma}_{[\frac{3k}{2}] +i +1} &= a_0 \tilde{\gamma}_{[\frac{3k}{2}] +i} +a_1 \tilde{\gamma}_{[\frac{3k}{2}] +i -1} +\ldots
      +a_{[\frac{3k}{2}]} \tilde{\gamma}_{i} \text{ for all } i \in \mathbb{Z}_+.
    \end{cases}
  \end{equation}
 Clearly, Formulas  \eqref{test,8} and \eqref{test,7} yield
  \begin{equation}\label{test,3}
 \tilde{\gamma}_i = \gamma_i \text{ for all } i= m_0, m_0+1, \ldots, m_0 +2[\frac{3k}{2}] +1.
  \end{equation}
  Hence $M_{[\frac{3k}{2}]}(\tilde{\gamma})(m_0) = M_{[\frac{3k}{2}]}(\gamma)(m_0)$. Since $M_{[\frac{3k}{2}]}(\gamma)(m_0) \geq 0$, then Lemma \ref{test,lem-1} implies that $M_\infty(\tilde{\gamma}) \geq 0$. Also, the sequence $\tilde{\gamma}$ is recursive, and then, according to \cite[Theorems 3.1 and 3.9]{cf91}, $\tilde{\gamma}$ has a finitely atomic  representing measure, say $\mu = \sum_{t=0}^{r-1} \rho_t \delta_{\lambda_t}$. In symbols
  \begin{equation}\label{test,4}
    \tilde{\gamma}_i = \int_\mathbb{R} x^i d\mu = \sum_{t=0}^{r-1} \rho_t \lambda_t^i.
  \end{equation}
 We have $
    \frac{\gamma_{n_0 +1 +j}}{\gamma_{n_0 +j}} = \displaystyle{ \alpha_{n_0 +j} =  \alpha_{n_0 +k +j} } =  \frac{\gamma_{n_0 +k +1 +j}}{\gamma_{n_0 +k +j}} \text{ for all } j = 0, 1, \ldots, k -1$, 
  then  \begin{equation}\label{test,31} \gamma_{n_0}\gamma_{n_0 +2k} = \gamma_{n_0 +k}^2.  \end{equation} By using \eqref{test,3}, \eqref{test,4} and \eqref{test,31}, one obtain
  \begin{equation*}
    \left( \sum\limits_{t=0}^{r-1} \rho_t (\lambda_t^k)^{n_0} \right) \left( \sum\limits_{t=0}^{r-1} \rho_t (\lambda_t^k)^{n_0 +2} \right)
    =\left(\sum\limits_{t=0}^{r-1} \rho_t (\lambda_t^k)^{n_0 +1} \right)^2.
  \end{equation*}
That implies
\begin{equation}\label{test,9}
\sum\limits_{0\leq i< j\leq r-1} \rho_i \rho_j (\lambda_i^k \lambda_j^k)^{n_0 +1} (\lambda_i^k -\lambda_j^k)^2 = 0,
\end{equation}
which gives $\lambda_i^k = \lambda_j^k$ whenever $\lambda_i \lambda_j \neq 0$.

Hence
\begin{equation}\label{test,10}
\mu = \rho_0 \delta_0 +\rho_1 \delta_{\lambda} +\rho_2 \delta_{-\lambda} \qquad \text{ with } \lambda >0.
\end{equation}
Note in passing that if $k$ is odd then $\rho_2 = 0$. Thereby,
\begin{equation*}
\tilde{\gamma}_0 = \rho_0 +\rho_1 +\rho_2 \text{ and } \tilde{\gamma}_n = \rho_1 (\lambda)^n +\rho_2(-\lambda)^n \text{ for all integer } n \geq 1.
\end{equation*}
For all $ j = 0, 1, \ldots, k -1$, the Equality \eqref{test,3} yields
 \begin{equation*}\begin{aligned}
   \alpha_{n_0 +j}^2 = \alpha_{n_0 +k +j}^2 &= \frac{\gamma_{n_0 +k +j +1}}{\gamma_{n_0 +k +j}} = \frac{\tilde{\gamma}_{n_0 +k +j +1}}{\tilde{\gamma}_{n_0 +k +j}}
   = \frac{\rho_1 \lambda^{n_0 +k +j +1} -\rho_2 \lambda^{n_0 +k +j +1}}{\rho_1 \lambda^{n_0 +k +j} -\rho_2 \lambda^{n_0 +k +j}}\\
   &= \frac{\rho_1 \lambda^{n_0 +2k +j +1} -\rho_2 \lambda^{n_0 +2k +j +1}}{\rho_1 \lambda^{n_0 +2k +j} -\rho_2 \lambda^{n_0 +2k +j}}
   = \frac{\tilde{\gamma}_{n_0 +2k +j +1}}{\tilde{\gamma}_{n_0 +2k +j}}
   = \frac{\gamma_{n_0 +2k +j +1}}{\gamma_{n_0 +2k +j}}\\
   &= \alpha_{n_0 +2k +j}^2.
   \end{aligned}
 \end{equation*}
Continuing this process, we obtain
\begin{equation}\label{test,11}
  \alpha_{n_0 +j} = \alpha_{n_0 +k +j} = \alpha_{n_0 +2k +j} = \alpha_{n_0 +3k +j} = \ldots,\qquad \text{ for } j= 0, 1 , \ldots, k-1.
\end{equation}

 (\textit{Inner propagation}) Now, let $n'_0$ be the smallest integer such that $n_0 \leq k n'_0$. According to \eqref{test,11}, we have
 $$
 \alpha_{k n'_0 +j} = \alpha_{(k+1) n'_0 +j}, \qquad j=0, 1, \ldots, k-1.
 $$
 Setting $m'_0 = \left[\frac{2(k-1)n'_0}{2}\right]$. The property $H([\frac{3k}{2}] +1)$ implies $M_{[\frac{3k}{2}] +1}(m'_0) \geq 0$. By using Smul'Jan's theorem, one obtain
 \begin{equation}\label{test,12}
    \gamma_{m'_0 +[\frac{3k}{2}] +1 +i} = \sum_{j=0}^{[\frac{3k}{2}]} b_j \gamma_{m'_0 +[\frac{3k}{2}] +i -j} \qquad ( i=0, 1 ,\dots, [\frac{3k}{2}] ),
  \end{equation}
where $b_0, \ldots, b_{[\frac{3k}{2}]}$ are real numbers.\\
Consider the recursive sequence $\hat{\gamma} = \{ \hat\gamma_i\}_{i \in \mathbb{Z}_+}$, defined by
\begin{equation}\label{test,13}
    \begin{cases}
      \hat{\gamma}_l &= \gamma_l, \qquad l= m'_0, m'_0 +1, \ldots, m'_0 +[\frac{3k}{2}]; \\
      \tilde{\gamma}_{[\frac{3k}{2}] +i +1} &= b_0 \tilde{\gamma}_{[\frac{3k}{2}] +i} +b_1 \tilde{\gamma}_{[\frac{3k}{2}] +i -1} +\ldots
      +b_{[\frac{3k}{2}]} \tilde{\gamma}_{i} \text{ for all } i \in \mathbb{Z}_+.
    \end{cases}
  \end{equation}
Remark that $\hat\gamma_i =\gamma_i$ for all $i= m'_0, \ldots, m'_0 +2[\frac{3k}{2}] +1$.\\
In a similar manner as in the proof of the "\textit{outer propagation}", we get
 $$
 \alpha_{(k-1)n'_0 +j} =  \alpha_{k n'_0 +j} (=  \alpha_{(k+1)n'_0 +j}) \qquad \text{ for } j = 0, \ldots, k-1.
 $$ 
Repeating the same process, we obtain the desired result. This completes the proof.
 \end{proof}

Note that the above proof furnishes  more interesting results. Indeed, if the hypothesis of Theorem \ref{test,th-1} are verified and $k$ is an odd integer. Then, as observed above, the measure given in \eqref{test,10} is represented by  $\mu = \rho_0 \delta_0 +\rho_1 \delta_{\lambda} $ where $\lambda$ is a positive number. Therefore, via \eqref{test,3}, one has 
$\alpha_{n_0 +1} = \alpha_{n_0 +2}$. By applying Theorem \ref{test,th-1}, the operator $W_\alpha$ is subnormal and has  1-jumping flatness property. 

Now, let us assume that the  hypothesis of Theorem \ref{test,th-1} are verified and $k$ is an even integer. In a similar way, one obtain $\alpha_{n_0} = \alpha_{n_0 +2}$ and $\alpha_{n_0 +1} = \alpha_{n_0 +3}$.
Using again Theorem \ref{test,th-1}, the operator $W_\alpha$ is of Hamburger-type and has  2-jumping flatness property.

\begin{thm}
    Let  $W_\alpha$ be a  weighed shift with  property $H([\frac{3k}{2}] +1)$ such that there  exists  $n_0 \in \mathbb{Z}_+$satisfying
   $$\alpha_{n_0 +j} = \alpha_{n_0 +k +j} \: \: \: \: \: \: \: \: \: \: \: \: \: \; \text{ for }j=0, 1, \ldots, k -1.$$ Then
   \begin{itemize}
     \item if $k$ is odd, then $W_\alpha$ is subnormal and is flat (has the $1$-jumping flatness property);
     \item if $k$ is even, then $W_\alpha$ is a Hamburger-type operator and has a 2-jumping flatness property.
   \end{itemize}
\end{thm}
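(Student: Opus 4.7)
The plan is to bootstrap from Theorem \ref{test,th-1}. That theorem gives, under the present hypothesis, the $k$-jumping flatness property, and inside its proof it produces an explicit description of the representing measure of an auxiliary recursive sequence $\tilde\gamma$: namely $\mu = \rho_0\delta_0 + \rho_1\delta_\lambda + \rho_2\delta_{-\lambda}$ with $\lambda > 0$, so that $\tilde\gamma_n = \rho_1\lambda^n + \rho_2(-\lambda)^n$ for every $n \geq 1$, together with the identification $\tilde\gamma_i = \gamma_i$ on the range of indices recorded in \eqref{test,3}. The parity of $k$ controls which atoms of $\mu$ can actually be nonzero, and this is the source of the dichotomy in the statement.

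Suppose first that $k$ is odd. The proof of Theorem \ref{test,th-1} already observes that \eqref{test,9} forces $\rho_2 = 0$, because $\lambda^k \neq (-\lambda)^k$ for odd $k$ and $\lambda > 0$. Hence $\tilde\gamma_n = \rho_1\lambda^n$ for $n \geq 1$, so the ratio $\tilde\gamma_{n+1}/\tilde\gamma_n$ equals $\lambda$ for every such $n$. Combining this with $\tilde\gamma_i = \gamma_i$ on the relevant range yields, in particular,
\[\alpha_{n_0+1}^2 = \frac{\gamma_{n_0+2}}{\gamma_{n_0+1}} = \lambda = \frac{\gamma_{n_0+3}}{\gamma_{n_0+2}} = \alpha_{n_0+2}^2.\]
Since the hypothesis $H([\tfrac{3k}{2}]+1)$ trivially implies $H(2) = H([\tfrac{3\cdot 1}{2}]+1)$, I then re-apply Theorem \ref{test,th-1} with the integer $k$ replaced by $1$ and base index $n_0+1$, concluding that $W_\alpha$ has the $1$-jumping flatness property, i.e.\ is flat. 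Finally, since the representing measure $\rho_0\delta_0 + \rho_1\delta_\lambda$ is supported in $[0,+\infty)$, Berger's theorem yields subnormality.

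Suppose now that $k$ is even. Here both $\rho_1$ and $\rho_2$ are allowed to be positive (positivity of the weights forces $\rho_1 > \rho_2 \geq 0$, so the denominators below are nonzero), and a direct computation from $\tilde\gamma_n = \rho_1\lambda^n + \rho_2(-\lambda)^n$ shows that for $n \geq 1$ the ratio $\tilde\gamma_{n+1}/\tilde\gamma_n$ depends only on the parity of $n$, namely
\[\frac{\tilde\gamma_{n+1}}{\tilde\gamma_n} = \lambda\cdot\frac{\rho_1-\rho_2}{\rho_1+\rho_2}\ \ (n \text{ even}), \qquad \frac{\tilde\gamma_{n+1}}{\tilde\gamma_n} = \lambda\cdot\frac{\rho_1+\rho_2}{\rho_1-\rho_2}\ \ (n \text{ odd}).\]
Transferring this back to $\gamma$ via \eqref{test,3} gives $\alpha_{n_0}^2 = \alpha_{n_0+2}^2$ and $\alpha_{n_0+1}^2 = \alpha_{n_0+3}^2$. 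Since $k \geq 2$ implies $[\tfrac{3k}{2}]+1 \geq 4 = [\tfrac{3\cdot 2}{2}]+1$, property $H(4)$ is available, and a second application of Theorem \ref{test,th-1} with the integer $2$ in place of $k$ produces the $2$-jumping flatness property, i.e.\ Hamburger-type behaviour with jumping flatness in the sense of the earlier definition.

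The only real obstacle is bookkeeping around \eqref{test,3}: one must ensure that the indices where $\tilde\gamma$ and $\gamma$ agree actually cover $\{n_0+1,n_0+2,n_0+3\}$ in the odd case and $\{n_0,\ldots,n_0+4\}$ in the even case, so that the formal ratio computations performed on $\tilde\gamma$ translate into genuine equalities between the original weights $\alpha_n$. With $m_0 = 2[n_0/2]$ this amounts to checking $m_0 + 2[\tfrac{3k}{2}]+1 \geq n_0 + 4$, which is immediate since $[\tfrac{3k}{2}] \geq 3$ whenever $k \geq 2$; the odd case $k=1$ is trivial because the conclusion there is the hypothesis itself. Beyond this technical check the argument is a clean reduction to Theorem \ref{test,th-1}.
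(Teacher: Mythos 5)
Your proposal is correct and follows essentially the same route as the paper: the authors likewise read off the measure \eqref{test,10} from the proof of Theorem \ref{test,th-1} (with $\rho_2=0$ when $k$ is odd), deduce $\alpha_{n_0+1}=\alpha_{n_0+2}$ in the odd case and $\alpha_{n_0}=\alpha_{n_0+2}$, $\alpha_{n_0+1}=\alpha_{n_0+3}$ in the even case via \eqref{test,3}, and then re-apply Theorem \ref{test,th-1} with $k=1$ or $k=2$. Your explicit ratio computations and the index bookkeeping are details the paper leaves implicit, but they do not change the argument.
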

\begin{rem}
In the previous theorem, for weighted shifts with  $k=2$, we have $H(n)=H([\frac{3k}{2}] +1)=H(4)$ which coincides with  the condition $H(4)$ as assumed in \cite{Ex3}.  It is natural to ask if the property $H([\frac{3k}{2}] +1)$ above  can be weakened to $H((\phi(n))$ for some $\phi(n) < [\frac{3k}{2}] +1$.
\end{rem}

\end{document}